 \newtheorem{Theorem}{Theorem}[section]
 \newtheorem{Corollary}[Theorem]{Corollary}
 \newtheorem{Lemma}[Theorem]{Lemma}
 \newtheorem{Proposition}[Theorem]{Proposition}
 \newtheorem{Remark}[Theorem]{Remark}
 \numberwithin{equation}{section}
\begin{document}

\title{Twisted version of Strong openness property in $
L^p$}

\author{Qi'an Guan, Zheng Yuan}
\address{Qi'an Guan: School of Mathematical Sciences,
Peking University, Beijing, 100871, China. }
\email{guanqian@math.pku.edu.cn}

\address{Zheng Yuan: School of Mathematical Sciences,
Peking University, Beijing, 100871, China.}
\email{zyuan@pku.edu.cn}

\thanks{}

\subjclass[2010]{32D15 32E10 32L10 32U05 32W05}

\keywords{strong openness property, $L^p$ integral, multiplier ideal sheaf, plurisubharmonic function}

\date{\today}

\dedicatory{}

\commby{}


\begin{abstract}
In this article, we present a twisted version of strong openness property in $L^p$ with applications.
\end{abstract}

\maketitle
\section{Introduction}
The strong openness property is an important feature of multiplier ideal sheaves and used in the study of several complex variables, algebraic geometry and complex geometry 
(see e.g. \cite{GZopen-c,GZ20,berndtsson20,ZhouZhu20,DEL21,ZhouZhu20siu's,FoW20,KS20,ZhouZhu18,DEL18,FoW18,cdM17,cao17,K16}).

Recall that  the multiplier ideal sheaf $\mathcal{I}(\varphi)$ (see  \cite{tian87,Nadel90,siu96,demailly-note2000}) was defined as the sheaf of germs of holomorphic functions $f$ such that
$|f|^{2}e^{-\varphi}$ is locally integrable,
where  $\varphi$ is a plurisubharmonic function (weight) on a complex manifold.
In \cite{GZopen-c}, Guan-Zhou proved  strong openness property for multiplier ideal sheaves
i.e. $\mathcal{I}(\varphi)=\mathcal{I}_{+}(\varphi):=\cup_{q>1}\mathcal{I}(q\varphi)$,
which was a conjecture posed by Demailly (see \cite{demailly2010,demailly-note2000}).
Two dimensional case of the conjecture was proved by Jonsson-Musta\c{t}\u{a} \cite{JM12}.

When $\mathcal{I}(\varphi)=\mathcal{O}$,
Demailly's strong openness conjecture degenerates to the openness conjecture,
which was posed by Demailly-Koll\'{a}r \cite{D-K01} and proved by Berndtsson \cite{berndtsson13} (2-dimensional case was proved by Favre-Jonsson \cite{FM05j}).

Using the strong openness property of multiplier ideal sheaves, Guan-Zhou \cite{GZ-lelong1} gave a characterization of the multiplier ideal sheaves with weights of Lelong number one.
Some generalized versions can be referred to \cite{GZ20}.

Recently, Xu \cite{xu} completed the algebraic approach to the openness conjecture,
which was conjectured by Jonsson-Musta\c{t}\u{a} \cite{JM12}.

\subsection{Background}

Let $\varphi$ be a plurisubharmonic function on a domain $D\subset\mathbb{C}^n$ containing the origin $o$. Let $I$ be an ideal of $\mathcal{O}_{o}$, which is generated by $\{f_j\}_{j=1,...,l}$.
Denote that
$$\log|I|:=\log\max_{1\leq j\leq l}|f_j|,$$
and $c_{o,p}^I(\varphi):=\sup\{c\geq0: |I|^pe^{-2c\varphi}$ is $L^1$ on a neighborhood of $o$$\}$.
When $p=2$,  $c_{o,2}^I(\varphi)$ is the jumping number $c_{o}^I(\varphi)$ (see \cite{JM13}).

In \cite{GZ-soc17}, Guan-Zhou established the following twisted version of strong openness property by using their solutions (\cite{GZopen-effect}) of two conjectures posed by Demailly-Koll\'{a}r and Jonsson-Musta\c{t}\u{a}.
\begin{Theorem}(see \cite{GZ-soc17})\label{thm:1}
	Let $a(t)$ be a positive measurable function on $(-\infty,+\infty)$ such that $a(t)e^{t}$ is strictly increasing and continuous near $+\infty$. Then the following three statements are equivalent:
	
	$(A)$ $a(t)$ is not integrable near $+\infty$;
	
	$(B)$ $a(-2c_{o}^{I}(\varphi)\varphi)\exp(-2c_{o}^{I}(\varphi)\varphi+2\log|I|)$ is not integrable near $o$ for any $\varphi$ and $I$ satisfying $c_{o}^{I}(\varphi)<+\infty$;
	
	$(C)$ $a(-2c_{o}^{I}(\varphi)\varphi+2\log|I|)\exp(-2c_{o}^{I}(\varphi)\varphi+2\log|I|)$ is not integrable near $o$ for any $\varphi$ and $I$ satisfying $c_{o}^{I}(\varphi)<+\infty$.	
\end{Theorem}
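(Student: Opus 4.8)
Set $b(t):=a(t)e^{t}$; by hypothesis $b$ is positive, strictly increasing and continuous for $t>T_{0}$, and since $a(t)=b(t)e^{-t}$, non-integrability of $a$ near $+\infty$ means exactly that $\int_{T_{0}}^{+\infty}b(t)e^{-t}\,dt=+\infty$. The plan is to establish $(A)\Rightarrow(B)$, $(A)\Rightarrow(C)$, and $\neg(A)\Rightarrow\neg(B)\wedge\neg(C)$; the last of these handles both reverse implications at once.

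For $\neg(A)\Rightarrow\neg(B)\wedge\neg(C)$ I would use a single explicit example. Assume $a$ is integrable near $+\infty$, let $D$ be a polydisc about $o$ in $\mathbb{C}^{n}$, and take $\varphi(z):=\log|z_{1}|$ and $I:=\mathcal{O}_{o}$, so that $\log|I|\equiv 0$ and a one-variable computation gives $c_{o}^{I}(\varphi)=1<+\infty$. Applying Fubini, polar coordinates in $z_{1}$, and the substitution $t=-2\log|z_{1}|$, both integrals appearing in $(B)$ and $(C)$ reduce to a positive constant times $\int_{T_{1}}^{+\infty}a(t)\,dt<+\infty$. Thus this pair $(\varphi,I)$ violates both $(B)$ and $(C)$.

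For the forward direction, fix $\varphi$ and $I$ with $c:=c_{o}^{I}(\varphi)<+\infty$; after shrinking $D$ and rescaling the generators of $I$ we may assume $\varphi\leq 0$ and $|I|\leq 1$ on $D$. Put $\Phi:=2c\varphi-2\log|I|\geq 2c\varphi$ and $d\mu:=e^{-2c\varphi}|I|^{2}\,d\lambda=e^{-\Phi}\,d\lambda$. First, because $c$ is the jumping number, strong openness \cite{GZopen-c} (applied to the weight $2c\varphi$ and the Noetherian ring $\mathcal{O}_{o}$) forces $|I|^{2}e^{-2c\varphi}\notin L^{1}$ on any neighborhood of $o$, that is, $\mu(U)=+\infty$ for every neighborhood $U\subset D$ of $o$; since $\mu(\{-2c\varphi\leq T\})\leq e^{T}\int_{D}|I|^{2}\,d\lambda<+\infty$ for each $T$, this sharpens to $\mu(\{-2c\varphi>T\})=+\infty$ and $\mu(\{-\Phi>T\})=+\infty$ for every $T$. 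Next, decompose $U$ into the value-shells $E_{k}:=\{k<-\Phi\leq k+1\}$, with $\mu_{k}:=\mu(E_{k})<+\infty$ (in the $(B)$-case use $-2c\varphi$ in place of $-\Phi$). Since $b=a\,e^{t}$ is increasing, on $E_{k}$ one has $a(-\Phi)\geq e^{-1}a(k)$, hence $\int_{U}a(-\Phi)\,d\mu\geq e^{-1}\sum_{k}a(k)\mu_{k}$; as $(C)$ is the assertion $\int_{U}a(-\Phi)\,d\mu=+\infty$ and $(B)$ is the analogue with $-2c\varphi$, it suffices to prove $\sum_{k}a(k)\mu_{k}=+\infty$.

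The crux, and what I expect to be the main obstacle, is a uniform lower bound $\mu_{k}\geq C>0$ for all large $k$ (and for the shells built from $-2c\varphi$). This is precisely where the effectiveness of strong openness enters: through the optimal $L^{2}$ extension theorem and the attendant convexity of minimal $L^{2}$ integrals furnished by Guan--Zhou \cite{GZopen-effect}, the truncated integral $G(t):=\int_{\{-\Phi\leq t\}}e^{-\Phi}\,d\lambda$ increases to $+\infty$ and is (essentially) convex, so its increments $\mu_{k}=G(k+1)-G(k)$ are eventually bounded below by a positive constant; it is here that one genuinely uses that $c$ is the \emph{jumping number} and not a subcritical exponent, for which the shell masses decay geometrically and $(B),(C)$ fail. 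Note that this lower bound does not follow from $\mu(\{-\Phi>T\})=+\infty$ alone, and that $\Phi$ is not plurisubharmonic (only a difference of plurisubharmonic functions), so the extension-and-convexity argument must be run on the pair $(\varphi,I)$ rather than on a single weight; getting past this is the technical heart of the argument. Granting $\mu_{k}\geq C$ for $k\geq T_{0}$, we obtain $\sum_{k}a(k)\mu_{k}\geq C\sum_{k\geq T_{0}}a(k)\geq C\,e^{-1}\int_{T_{0}}^{+\infty}a(t)\,dt=+\infty$ by $(A)$, which yields $(B)$ and $(C)$ and completes the proof.
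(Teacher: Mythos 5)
Your reductions are fine: the explicit example $\varphi=\log|z_1|$, $I=\mathcal{O}_o$ for $\neg(A)\Rightarrow\neg(B)\wedge\neg(C)$ matches the paper, and the reformulation of $(B)$, $(C)$ as $\int a(-\Psi)\,d\mu=+\infty$ for the right $\Psi$ and $\mu$ is correct, as is the shell estimate $a(-\Psi)\geq e^{-1}a(k)$ on $E_k$ from monotonicity of $a(t)e^t$.

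The gap is exactly where you flag it, the uniform lower bound $\mu_k\geq C$, and the justification you offer does not hold up. The concavity theorem you are gesturing at (Theorem~2.1 here, from \cite{GY-concavity}) applies to the \emph{minimal $L^2$ integral} $G(t)=\inf\{\int_{\{\psi<-t\}}|\tilde f|^2e^{-\varphi}c(-\psi)\}$, an infimum over holomorphic extensions, not to the truncated integral $\int_{\{-\Phi\leq t\}}e^{-\Phi}\,d\lambda$; these are different objects, and it yields \emph{concavity} of the reparametrized $G$, not convexity of your truncation. What it actually produces (Proposition~2.3 / Corollary~2.4) is only a \emph{cumulative tail} estimate $\int_{\{2c\varphi<-t\}}|I|^2\geq Ce^{-t}$, i.e.\ $\sum_{j\geq k}\int_{E_j}|I|^2\geq Ce^{-k}$ with $E_j$ the shells in $-2c\varphi$. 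A cumulative bound $\sum_{j\geq k}m_j\geq Ce^{-k}$ does not yield a per-shell bound $m_k\geq C'e^{-k}$ (the mass could concentrate on a sparse subsequence of shells), so for the $(B)$-case your $\mu_k\geq C$ is unsupported. For the $(C)$-case the per-shell bound is in fact true, but it comes directly from Lemma~2.5 (Guan--Zhou's effectiveness estimate, inequality \eqref{eq:210820d}), which is stated precisely as a shell estimate; that is a quantitative measure bound via optimal $L^2$ extension, not a convexity statement, and you should cite it as such rather than appeal to convexity of $G$.

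The paper sidesteps this entirely. Rather than proving per-shell lower bounds, it applies a layer-cake comparison (Lemma~2.7): from the tail bound $\mu_1(Y_r)\geq Cr$ together with monotonicity and continuity of $a(t)e^t$ one gets $\mu_1(\{g_1\geq r^{-1}\})\geq\mu_2(\{g_2\geq r^{-1}\})$ with $g_2(x)=a(-\log x+\log C)Cx^{-1}$ on $(0,1]$, hence $\int g_1\,d\mu_1\geq\int g_2\,d\mu_2=C\int^{+\infty}a(t)\,dt=+\infty$. This only consumes cumulative information and treats $(B)$ and $(C)$ uniformly (Corollary~2.4 for $(B)$, Proposition~2.6 for $(C)$). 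Your shell decomposition can be rescued for $(B)$ without per-shell bounds by an Abel/summation-by-parts argument using $S_k:=\sum_{j\geq k}m_j\geq Ce^{-k}$ and the monotonicity of $b_k=a(k)e^k$, but as written the claim ``$\mu_k\geq C$ from convexity'' is a genuine error, and the cleaner route is the tail-comparison lemma.
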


In \cite{Fo15}, Forn{\ae}ss established the following strong openness property in $L^p$ by using the strong openness property, which implies the coherence of $L^p$ multiplier ideal sheaves (see \cite{siu04,cao17}):

\emph{Let $F$ be a holomorphic function on a domain $D\subset\mathbb{C}^n$ containing the origin $o$, $\varphi$ a plurisubharmonic function on $D$ and $p\in(0,+\infty)$. If $|F|^pe^{-\varphi}$ is $L^1$ on a neighborhood of $o$, then there exists $q>1$ such that $|F|^pe^{-q\varphi}$ is $L^1$ on a neighborhood of $o$.}

\subsection{Main result and applications}
In the present paper, we give the following twisted version of strong openness property in $L^p$ by using the concavity of minimal $L^2$ integrals and the solutions (\cite{GZopen-effect}) of two conjectures of Demailly-Koll\'{a}r and Jonsson-Musta\c{t}\u{a}.

\begin{Theorem}\label{thm:main}
	Let $p\in(0,+\infty)$, and let $a(t)$ be a positive measurable function on $(-\infty,+\infty)$. If  one of the following conditions holds:
	
	$(1)$ $a(t)$ is decreasing  near $+\infty$;
	
	$(2)$ $a(t)e^t$ is increasing near $+\infty$.
	
	 Then the following three statements are equivalent:
	
	$(A)$ $a(t)$ is not integrable near $+\infty$;
	
	$(B)$ $a(-2c_{o,p}^{I}(\varphi)\varphi)\exp(-2c_{o,p}^{I}(\varphi)\varphi+p\log|I|)$ is not integrable near $o$ for any $\varphi$ and $I$ satisfying $c_{o,p}^{I}(\varphi)<+\infty$;
	
	$(C)$ $a(-2c_{o,p}^{I}(\varphi)\varphi+p\log|I|)\exp(-2c_{o,p}^{I}(\varphi)\varphi+p\log|I|)$ is not integrable near $o$ for any $\varphi$ and $I$ satisfying $c_{o,p}^{I}(\varphi)<+\infty$.	
	\end{Theorem}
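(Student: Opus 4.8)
The plan is to prove the equivalence through the implications $(B)\Rightarrow(A)$, $(C)\Rightarrow(A)$, $(A)\Rightarrow(B)$ and $(A)\Rightarrow(C)$, following the pattern of the proof of Theorem~\ref{thm:1}. For the two ``easy'' implications I would test the hypotheses against a single explicit pair. After shrinking $D$ so that $o$ is the centre of a polydisc, take $\varphi_{0}=\log|z_{1}|^{2}$ and $I=\mathcal{O}_{o}$, generated by $1$, so that $\log|I|\equiv0$. A one-variable computation gives $c_{o,p}^{I}(\varphi_{0})=1/2$ for every $p$, which is finite, and for this pair statements $(B)$ and $(C)$ coincide; the substitution $t=-2\log|z_{1}|$ turns $\int_{U}a\big(-2c_{o,p}^{I}(\varphi_{0})\varphi_{0}\big)\exp\big(-2c_{o,p}^{I}(\varphi_{0})\varphi_{0}\big)\,dV$ into a positive multiple of $\int^{+\infty}a(t)\,dt$. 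Hence if $(A)$ fails, the integrand of $(B)$ and of $(C)$ is integrable near $o$ for this pair, so $(B)$ and $(C)$ fail; contrapositively $(B)\Rightarrow(A)$ and $(C)\Rightarrow(A)$, and this step uses neither $(1)$ nor $(2)$.

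For the substantive implications, fix $\varphi$ and $I$ with $c:=c_{o,p}^{I}(\varphi)<+\infty$, a small neighbourhood $U$ of $o$, and set $w:=|I|^{p}e^{-2c\varphi}=\exp(-2c\varphi+p\log|I|)$; by definition of the $L^{p}$ jumping number $\int_{U}w\,dV=+\infty$ while $\int_{U}|I|^{p}e^{-2c'\varphi}\,dV<+\infty$ for all $c'<c$. I would run a layer-cake decomposition, pushing the measure $w\,dV|_{U}$ forward along $-2c\varphi$ to get a measure $\nu$ on $\mathbb{R}$, and along $\log w=-2c\varphi+p\log|I|$ to get $\widetilde{\nu}$, so that
\[
\int_{U}a(-2c\varphi)\,w\,dV=\int_{\mathbb{R}}a\,d\nu,\qquad\int_{U}a\big(-2c\varphi+p\log|I|\big)\,w\,dV=\int_{\mathbb{R}}a\,d\widetilde{\nu}.
\]
The whole problem then reduces to the claim that the shell masses $\nu((s,s+1])$ and $\widetilde{\nu}((s,s+1])$ are bounded below by a fixed $\delta>0$ for all large $s$. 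Granting this, either hypothesis $(1)$ or $(2)$ finishes the argument: each of them gives $\int_{s}^{s+1}a\le C\inf_{[s,s+1]}a$ with $C$ independent of $s$ (immediate for $(1)$; for $(2)$ one uses $\inf_{[s,s+1]}a\ge e^{-1}a(s)$ and $\int_{s}^{s+1}a\le e\,a(s+1)$), so $\int^{+\infty}a=+\infty$ forces $\sum_{s}\inf_{[s,s+1]}a=+\infty$ and hence $\int_{\mathbb{R}}a\,d\nu\ge\delta\sum_{s}\inf_{[s,s+1]}a=+\infty$, and likewise for $\widetilde{\nu}$ — here one notes $\widetilde{\nu}((s,s+1])=\int_{\{e^{s}<w\le e^{s+1}\}}w\,dV$ and that $\int_{\{w>e^{s}\}}w\,dV=+\infty$ for every $s$ since $\int_{\{w\le e^{s}\}}w\,dV\le e^{s}|U|<+\infty$.

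The heart of the proof is the shell lower bound, which is where the two announced inputs enter. For $p=2$ it is a consequence of the effectiveness of Guan--Zhou's solutions of the conjectures of Demailly--Koll\'{a}r and Jonsson--Musta\c{t}\u{a} \cite{GZopen-effect}: since $c$ is precisely the jumping number, the integral of $|I|^{2}e^{-2c\varphi}$ over the shell $\{-(s+1)/(2c)\le\varphi<-s/(2c)\}$ cannot tend to $0$ as $s\to+\infty$. For general $p$ I would use the concavity of minimal $L^{2}$ integrals, in the version adapted to ideal sheaves and $L^{p}$ integrands: one attaches to $(\varphi,I,p,c)$ a suitable minimal $L^{2}$ integral $G(t)$, an infimum of $\int_{\{\varphi<-t\}\cap U}|f|^{2}|I|^{p}e^{-2c\varphi}\,dV$ over holomorphic $f$ with a prescribed germ at $o$ realising the jumping number, uses that after the standard substitution $G$ is concave, and deduces from the jumping-number normalisation that $G$ must be, up to multiplicative constants, affine on the relevant range — which is exactly the uniform positive lower bound for the shells of $w$. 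When $|I|^{p}$ is already a squared modulus (e.g. $I$ principal and $p$ even) this reduces literally to the $L^{2}$ effectiveness applied to a function realising $|I|^{p/2}$, and the general $p$ is reached by an approximation in $p$ combined with the concavity; the horizontal shells $\{e^{s}<w\le e^{s+1}\}$ used for $(C)$ are then governed by the same estimate.

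The step I expect to be the main obstacle is exactly this $L^{p}$ shell lower bound for a general, not necessarily principal, ideal $I$: $|I|^{p}$ is not the squared modulus of a holomorphic function, so the $L^{2}$ effectiveness cannot be quoted verbatim, and the concavity machinery has to be set up in a form that simultaneously absorbs the twist by $\log|I|$ and the exponent $p$ while keeping the jumping-number normalisation intact. Everything else — the explicit test pair, the layer-cake bookkeeping, and checking that the regimes $(1)$ and $(2)$ both convert $\int^{+\infty}a=+\infty$ into divergence of the discretised shell sums — is routine.
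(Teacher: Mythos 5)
Your handling of the easy implications is correct and essentially identical to the paper's (the paper tests $I=\mathcal{O}_o$, $\varphi=\log|z_1|$ so $c=1$; your $\varphi_0=\log|z_1|^2$ with $c=1/2$ is the same example up to normalisation). The real issue is the shell lower bound, which is the linchpin of your approach and which does \emph{not} hold in the form you need, nor does it follow from the ingredients you invoke.

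The effectiveness inputs — Corollary~\ref{c:DK} (from the concavity of minimal $L^2$ integrals) and Proposition~\ref{p:JM} (the Jonsson--Musta\c{t}\u{a}-type effectiveness) — only give \emph{tail} estimates: $\frac{1}{r}\int_{\{2c\varphi<\log r\}}|I|^p\geq C$ and $\frac{1}{r}\mu(\{2c\varphi-p\log|I|<\log r\})\geq C$. In the language of your pushed-forward measures, these say $\nu((s,+\infty))$ and $\widetilde\nu((s,+\infty))$ are bounded below (in fact both are $+\infty$, since the sublevel set with the weight $e^{-2c\varphi}$ is not integrable). They do \emph{not} control the shell $\nu((s,s+1])$, and indeed the distribution function $s\mapsto\int_{\{-2c\varphi>s\}}|I|^p$ can concentrate its decay in sparse intervals, making $\nu((s,s+1])$ arbitrarily small for infinitely many $s$ while the tail bound is preserved. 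Your concavity argument — that $G(h^{-1}(r))$ being concave with the jumping-number normalisation forces $G$ to be ``affine up to constants'' — is incorrect: concavity only yields the one-sided chord inequality $G(t)\geq e^{T_1-t}G(T_1)$, which is exactly the tail bound again, not a two-sided bound, so it gives no shell estimate.

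The paper circumvents this entirely. It never decomposes into shells: for condition $(2)$ it first uses Lemma~\ref{l:2} to replace $a$ by a minorant $\tilde a\leq a$ with $\tilde a(t)e^t$ \emph{strictly increasing and continuous}, and then applies the monotone-rearrangement Lemma~\ref{l:m} with $g_1=\tilde a(-2c\varphi)e^{-2c\varphi}$ (respectively $g_1=\tilde a(-2c\varphi+p\log|I|)e^{-2c\varphi+p\log|I|}$) and a one-dimensional comparison $g_2(x)=\tilde a(-\log x+\log C)Cx^{-1}$. Because $\tilde a(\tau)e^\tau$ is strictly increasing, the superlevel sets $\{g_1\geq\lambda\}$ are precisely the superlevel sets $\{-2c\varphi\geq\cdot\}$ (respectively $\{-2c\varphi+p\log|I|\geq\cdot\}$), and the tail bounds from Corollary~\ref{c:DK} and Proposition~\ref{p:JM} plug directly into the hypothesis of Lemma~\ref{l:m}. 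This is exactly the step your shells cannot supply. For condition $(1)$ the paper's argument is of a completely different nature: set $c(t)=a(t)e^t+1$, verify $c\in\mathcal{Q}_T$ and $\int c(t)e^{-t}dt=+\infty$, then Corollary~\ref{infty} forces $G(T)=+\infty$, while if the twisted integral were finite one would have $G(T)\leq\int|f_1|^2e^{-\varphi_1-2c\varphi}a(-2c\varphi)+\int|f|^p<+\infty$ — a contradiction; and $(A)\Rightarrow(C)$ then follows from $(A)\Rightarrow(B)$ just because $a$ decreasing and $\log|I|<0$ near $o$ give $a(-2c\varphi+p\log|I|)\geq a(-2c\varphi)$. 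Finally, the $L^p$-to-$L^2$ reduction does not require any ``approximation in $p$'': the paper writes $|f|^p=|f_1|^2e^{-\varphi_1}$ with $f_1=f^{\lceil p/2\rceil}$ and $\varphi_1=(2\lceil p/2\rceil-p)\log|f|$ for a single $(f,o)\in I$ attaining $c_{o,p}^I(\varphi)=c_{o,p}^f(\varphi)$, which is both exact and elementary.
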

\begin{Remark}
	When $p=2$ and condition (2) holds, Theorem \ref{thm:main} is a general version of Theorem \ref{thm:1}.
\end{Remark}

Let $I=\mathcal{O}_{o}$, and denote $c_o^1(\varphi)$ by $c_o(\varphi)$. Theorem \ref{thm:main} implies the following.

\begin{Corollary}\label{c:c}
	Let $\varphi<0$ be a plurisubharmonic function on a neighborhood $U\ni o$ with $c_o^1(\varphi)\in(0,+\infty)$. Let $\kappa:\mathbb{R}\rightarrow(0,+\infty)$ be a decreasing  function satisfying that $\kappa$ is not integrable near $+\infty$. Then for any neighborhood $V\ni o$ one has
	$$|\varphi|^{-1}\kappa(\log(-\varphi))e^{-2c_o(\varphi)\varphi}\not\in L^1(V).$$
\end{Corollary}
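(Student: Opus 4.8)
The plan is to derive Corollary~\ref{c:c} from Theorem~\ref{thm:main} by specializing to $p=1$, $I=\mathcal{O}_o$, and choosing $a$ in terms of $\kappa$. First I would normalize the data: since $\varphi<0$ on $U$, we have $-\varphi>0$, so $\log(-\varphi)$ is defined (possibly taking values in $[-\infty,+\infty)$), and $c_o^1(\varphi)=c_{o,1}^{\mathcal{O}_o}(\varphi)\in(0,+\infty)$ so the hypothesis $c_{o,p}^I(\varphi)<+\infty$ of Theorem~\ref{thm:main} is met. With $I=\mathcal{O}_o$ we have $\log|I|=0$ identically, so statement $(B)$ of Theorem~\ref{thm:main} becomes: $a(-2c_o(\varphi)\varphi)\exp(-2c_o(\varphi)\varphi)$ is not integrable near $o$.

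The key step is to pick $a$ so that $a(-2c_o(\varphi)\varphi)$ reproduces the factor $|\varphi|^{-1}\kappa(\log(-\varphi))$. Write $c:=c_o(\varphi)\in(0,+\infty)$. On the locus where $-\varphi$ is large (i.e.\ near $o$ in the relevant sense), set $t=-2c\varphi=2c(-\varphi)>0$, so $-\varphi=t/(2c)$ and $\log(-\varphi)=\log t-\log(2c)$. Then define, for $t$ large,
\[
a(t):=\frac{1}{t}\,\kappa\!\left(\log t-\log(2c)\right),
\]
extended to all of $\mathbb{R}$ as any positive measurable function (the behavior away from $+\infty$ is irrelevant to all three statements $(A)$, $(B)$, $(C)$). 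With this choice $a(-2c\varphi)=\frac{1}{2c(-\varphi)}\kappa(\log(-\varphi))=\frac{1}{2c}|\varphi|^{-1}\kappa(\log(-\varphi))$, so the function in statement $(B)$ equals $\frac{1}{2c}|\varphi|^{-1}\kappa(\log(-\varphi))e^{-2c\varphi}$, which differs from the target function of the Corollary only by the positive constant $\frac{1}{2c}$; hence non-integrability of one near $o$ is equivalent to that of the other. Thus it suffices to verify that this $a$ satisfies hypothesis $(1)$ or $(2)$ of Theorem~\ref{thm:main} and that $a$ is not integrable near $+\infty$, i.e.\ statement $(A)$; then $(A)\Rightarrow(B)$ gives the conclusion.

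To check the hypotheses: I would verify condition $(1)$, that $a$ is decreasing near $+\infty$. Since $\kappa$ is decreasing and $t\mapsto\log t-\log(2c)$ is increasing, the composition $t\mapsto\kappa(\log t-\log(2c))$ is decreasing; multiplying by the decreasing positive function $1/t$ keeps it decreasing, so $a$ is decreasing near $+\infty$ and condition $(1)$ holds. For the non-integrability $(A)$: substitute $s=\log t-\log(2c)$, so $t=2ce^s$ and $dt=2ce^s\,ds=t\,ds$, giving
\[
\int^{+\infty}a(t)\,dt=\int^{+\infty}\frac{1}{t}\kappa(\log t-\log(2c))\,dt=\int^{+\infty}\kappa(s)\,ds=+\infty,
\]
since $\kappa$ is assumed not integrable near $+\infty$. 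Therefore $(A)$ holds, Theorem~\ref{thm:main} yields $(B)$, and unwinding the constant gives $|\varphi|^{-1}\kappa(\log(-\varphi))e^{-2c_o(\varphi)\varphi}\notin L^1(V)$ for every neighborhood $V\ni o$, since ``not integrable near $o$'' means exactly that it fails to be $L^1$ on every neighborhood of $o$.

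The main point requiring care — though it is not a deep obstacle — is the bookkeeping connecting ``near $+\infty$'' for $a$ with ``near $o$'' for the composed weight: one must observe that the statements $(A)$, $(B)$, $(C)$ in Theorem~\ref{thm:main} are insensitive to altering $a$ on any bounded interval, so defining $a$ only for large $t$ (via the formula above) and arbitrarily elsewhere is legitimate; and that the change of variables $s=\log t-\log(2c)$ matches the ranges correctly ($t\to+\infty$ iff $s\to+\infty$). One should also note that the set where $-\varphi$ is small contributes a locally bounded, integrable quantity, so it does not affect the non-integrability assertion, which is governed entirely by the singularity at $o$ where $-\varphi\to+\infty$; hence replacing $\kappa$ by its large-argument values in the definition of $a$ loses nothing.
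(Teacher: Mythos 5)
Your proof is correct and takes essentially the same route as the paper: the authors likewise define $a(t)=\tfrac{1}{t}\kappa(\log t-\log(2c_o(\varphi)))$, verify statement $(A)$ by the same logarithmic change of variables, and invoke Theorem~\ref{thm:main} under the decreasing condition $(1)$ with $I=\mathcal{O}_o$. The only cosmetic difference is that the paper specializes to $p=2$ while you take $p=1$, but since $|I|\equiv 1$ this has no effect on $c_{o,p}^I(\varphi)$ or on the weight appearing in $(B)$.
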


When $\kappa\in C^{1}(0,+\infty)$ satisfies $\kappa(t)\geq-5\kappa'(t)$ for $t\gg1$, Corollary \ref{c:c} can be referred to \cite{chen18}.

\begin{Remark}
Corollary \ref{c:c} gives an affirmative answer to a question posed in \cite{chen18}, i.e.,
the condition  $\kappa(t)\geq-5\kappa'(t)$ for $t\gg1$  (Condition (2) of Theorem 1.2 in \cite{chen18}) can be removed.
\end{Remark}

\section{Preparation}

Let $D\subset\mathbb{C}^n$ be a pseudoconvex domain containing the origin $o$,  and let  $\varphi$ and $\psi$ be  plurisubharmonic functions on $D$. Take $T=-\sup_{D}\psi$. Let $f$ be a holomorphic function on a neighborhood of the origin $o$.

We call a positive measurable function $c$ on $(T,+\infty)$ in class $\mathcal{Q}_T$ if the following two statements hold:

$(1)$ $c(t)e^{-t}$ is decreasing with respect to $t$;

$(2)$ $\liminf_{t\rightarrow+\infty}c(t)>0$.

Denote
\begin{equation*}
\inf\{\int_{\{\psi<-t\}}|\tilde{f}|^{2}e^{-\varphi}c(-\psi):(\tilde{f}-f,o)\in
\mathcal{I}(\varphi+\psi)_{o}\,\&{\,}\tilde{f}\in \mathcal{O}(\{\psi<-t\})\},
\end{equation*}
by $G(t)$,  where $t\in[T,+\infty)$ and $c\in\mathcal{Q}_T$.

In \cite{GY-concavity}, we obtain the following concavity of $G(t)$.
\begin{Theorem}
	(see \cite{GY-concavity})\label{thm:concavity}
	If there exists $t\in[T,+\infty)$ satisfying $G(t)<+\infty$, then $G(h^{-1}(r))$ is concave with respect to $r\in(\int_{T_1}^{T}c(t)e^{-t}dt,\int_{T_1}^{+\infty}c(t)e^{-t}dt)$, $\lim_{t\rightarrow T+0}G(t)=G(T)$ and $\lim_{t\rightarrow+\infty}G(t)=0$, where $h(t)=\int_{T_1}^tc(l)e^{-l}dl$ and $T_1\in(T,+\infty)$.
\end{Theorem}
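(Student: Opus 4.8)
The plan is to reduce the concavity to a single sharp $\bar\partial$-extension estimate and to read off the two boundary limits from elementary compactness. After the usual reductions I may assume $D$ is a bounded pseudoconvex domain, $\varphi,\psi$ are smooth and strictly plurisubharmonic (exhaust $D$, regularize $\varphi,\psi$ by decreasing sequences, and pass to the limit at the end by a diagonal argument; the germ condition $(\tilde f-f,o)\in\mathcal I(\varphi+\psi)_o$ survives the limit because it only constrains the Taylor jet at $o$), and $\psi(o)=-\infty$ (otherwise $o\notin\{\psi<-t\}$ for large $t$ and the constraint there is vacuous). The sublevel sets $\{\psi<-t\}$ are then themselves pseudoconvex, so Hörmander-type $L^2$-machinery is available on each. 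First I would record that $G$ is nonincreasing, since restricting a competitor from $\{\psi<-t_1\}$ to $\{\psi<-t_2\}$ with $t_1<t_2$ cannot increase the nonnegative integral; and that when $G(t)<+\infty$ the infimum is attained: a minimizing sequence has uniformly bounded $L^2$ norms, and since $c\in\mathcal Q_T$ gives $c(-\psi)$ a positive lower bound on compacta of $\{\psi<-t\}$, one extracts a locally uniform limit $F_t\in\mathcal O(\{\psi<-t\})$, Fatou bounds its integral by $G(t)$, and the germ condition passes to the limit by uniform convergence near $o$; thus $G(t)=\int_{\{\psi<-t\}}|F_t|^2e^{-\varphi}c(-\psi)$.

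The crux is the three-point inequality: for $T\le t_1<t_2<t_3<+\infty$ with $G(t_1)<+\infty$,
\begin{equation*}
G(t_1)\le G(t_3)+\frac{\int_{t_1}^{t_3}c(s)e^{-s}\,ds}{\int_{t_2}^{t_3}c(s)e^{-s}\,ds}\,\bigl(G(t_2)-G(t_3)\bigr).
\end{equation*}
Granting it, concavity of $G\circ h^{-1}$ is a one-variable computation: with $r_i=h(t_i)$ this reads $G(t_2)\ge\frac{r_3-r_2}{r_3-r_1}G(t_1)+\frac{r_2-r_1}{r_3-r_1}G(t_3)$, the secant characterization of concavity on $(h(T),h(+\infty))$ (and it is an equality when $G\circ h^{-1}$ is affine, which is the case in the radial model examples, a useful sanity check). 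To prove it I would take the minimizer $F_{t_2}$ on $\{\psi<-t_2\}$, fix an auxiliary $B>0$, multiply by a cutoff $\theta=\theta(\psi)$ equal to $1$ on $\{\psi<-t_2-B\}$ and $0$ on $\{\psi\ge-t_2\}$, so that $\bar\partial(\theta F_{t_2})=F_{t_2}\,\bar\partial\theta$ is supported in $\{-t_2-B\le\psi<-t_2\}$, and solve $\bar\partial u=F_{t_2}\bar\partial\theta$ on $\{\psi<-t_1\}$ with a Donnelly--Fefferman--Berndtsson-type $L^2$ estimate, using the weight $e^{-\varphi-\psi}$ twisted by a function of $\psi$ manufactured from $c$: this twist absorbs $|\bar\partial\theta|^2$ into the curvature term and, chosen optimally, yields the stated constant after the right choice of $B$, a Cauchy--Schwarz split, and a chaining/limiting step (together with the easy restriction inequality $G(t_2)\le G(t_1)$). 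Since the weight carries $e^{-\psi}$, the solution $u$ satisfies $\int_U|u|^2e^{-\varphi-\psi}<+\infty$ on a neighborhood $U\ni o$, i.e. $(u,o)\in\mathcal I(\varphi+\psi)_o$; as $\theta\equiv1$ near $o$, the function $\tilde F:=\theta F_{t_2}-u$ is holomorphic on $\{\psi<-t_1\}$ with $(\tilde F-f,o)\in\mathcal I(\varphi+\psi)_o$, hence a legitimate competitor for $G(t_1)$ with the claimed bound on its weighted norm.

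For $\lim_{t\to T+0}G(t)=G(T)$: monotonicity gives ``$\le$'', while the minimizers $F_t$ for $t$ near $T$ have uniformly bounded norms, so a locally uniform limit $F\in\mathcal O(\{\psi<-T\})$ exists, retains the germ condition at $o$, and by Fatou along the exhaustion $\{\psi<-t\}\uparrow\{\psi<-T\}$ has $\int|F|^2e^{-\varphi}c(-\psi)\le\liminf_{t\to T+}G(t)$, forcing equality. For $\lim_{t\to+\infty}G(t)=0$: fix a finite level $t_0$ with $G(t_0)<+\infty$ and its minimizer $F_{t_0}$; since $\{\psi<-t\}\downarrow\{\psi=-\infty\}$, a Lebesgue-null set, dominated convergence gives $\int_{\{\psi<-t\}}|F_{t_0}|^2e^{-\varphi}c(-\psi)\to0$, and this dominates $0\le G(t)$. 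I expect the main obstacle to be the sharp $\bar\partial$-estimate in the key inequality: obtaining the constant up to a harmless multiplicative factor is routine Hörmander--Donnelly--Fefferman theory, but getting it \emph{exactly} equal to $\int_{t_1}^{t_3}c\,e^{-s}ds\big/\int_{t_2}^{t_3}c\,e^{-s}ds$ — which is precisely what makes $G\circ h^{-1}$ genuinely concave rather than merely ``almost concave'' — requires the optimal choice of the $\psi$-twist in the weight, exactly as in the optimal-constant version of the Ohsawa--Takegoshi extension theorem; the secondary delicate point is the regularization/limit procedure that must preserve the ideal-membership constraint $(\tilde f-f,o)\in\mathcal I(\varphi+\psi)_o$.
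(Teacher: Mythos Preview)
The paper does not prove this theorem at all; it is quoted verbatim from the companion preprint \cite{GY-concavity} (see also \cite{guan_general concave}, \cite{guan_sharp}) and used here only as input to Corollary~\ref{infty} and Proposition~\ref{p:DK}. So there is no ``paper's own proof'' to compare against. That said, your outline is an accurate high-level description of how the result is established in those references: the engine is an optimal-constant $L^{2}$ extension lemma of Ohsawa--Takegoshi/Donnelly--Fefferman type which, applied to a minimizer $F_{t_2}$ on $\{\psi<-t_2\}$, manufactures a competitor on the larger sublevel set with exactly the integral bound that encodes the secant inequality for $G\circ h^{-1}$; and the two boundary limits are handled, as you say, by monotonicity together with a normal-families/Fatou argument and the fact that $\{\psi=-\infty\}$ is Lebesgue-null.

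One point deserves correction. Your claim that the germ constraint $(\tilde f-f,o)\in\mathcal I(\varphi+\psi)_o$ ``only constrains the Taylor jet at $o$'' and therefore passes through regularization automatically is not right: multiplier ideals are not in general cut out by finite jets, and their stability under decreasing approximation of the weight is precisely the content of the strong openness property, not a triviality. In the actual proofs this is handled either by running the $\bar\partial$-estimate directly with the singular weight $e^{-\varphi-\psi}$ (so no regularization of the ideal is needed), or by invoking the closedness of coherent ideals under local uniform limits, as the present paper itself does in the proof of Proposition~\ref{p:DK}. Your instinct that the regularization/limit step is ``the secondary delicate point'' is correct; just do not trivialize it with the jet remark. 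Apart from this, and the admitted sketchiness of how the sharp constant $\int_{t_1}^{t_3}c\,e^{-s}\,ds\big/\int_{t_2}^{t_3}c\,e^{-s}\,ds$ actually emerges from the twist (in the literature this comes out of a careful choice of the auxiliary function of $\psi$ and a limit $B\to0$, not a single clean step), your plan matches the cited source.
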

 Theorem \ref{thm:concavity} implies the following corollary, which will be used in the proof of Theorem \ref{thm:main}.
\begin{Corollary}
(see \cite{GY-concavity})
\label{infty}
	Let $c\in\mathcal{Q}_T.$ If $\int_{T_1}^{+\infty}c(t)e^{-t}dt=+\infty$ for some $T_1>T$, and $(f,o)\notin
\mathcal{I}(\varphi+\psi)_{o}$, then $G(t)=+\infty$ for any $t\geq T$, i.e., there is no holomorphic function $\tilde{f}$ on $\{\psi<-t\}$ satisfying $(\tilde{f}-f,o)\in
\mathcal{I}(\varphi+\psi)_{o}$ and $\int_{\{\psi<-t\}}|\tilde{f}|^{2}e^{-\varphi}c(-\psi)<+\infty$.	
\end{Corollary}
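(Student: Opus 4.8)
The plan is to argue by contradiction: suppose $G(t_1)<+\infty$ for some $t_1\geq T$, and derive $(f,o)\in\mathcal{I}(\varphi+\psi)_{o}$, contradicting the hypothesis. Since $t_1$ witnesses the hypothesis of Theorem \ref{thm:concavity}, that theorem applies. Set $h(t)=\int_{T_1}^{t}c(l)e^{-l}\,dl$; then $h$ is continuous and strictly increasing on $(T,+\infty)$, and because $\int_{T_1}^{+\infty}c(l)e^{-l}\,dl=+\infty$ it maps $(T,+\infty)$ onto a right half-line $(\beta,+\infty)$ with $\beta:=\int_{T_1}^{T}c(l)e^{-l}\,dl\in[-\infty,0)$. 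Hence $r\mapsto G(h^{-1}(r))$ is concave on $(\beta,+\infty)$, with $G(h^{-1}(r))\to 0$ as $r\to+\infty$ (since then $h^{-1}(r)\to+\infty$) and $\lim_{t\to T+0}G(t)=G(T)$.

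The first step is to upgrade this to $G\equiv 0$ on $[T,+\infty)$. Since $G$ is an infimum of integrals of nonnegative functions, $G\geq 0$; and a nonnegative concave function $g$ on a right half-line with $g(r)\to 0$ as $r\to+\infty$ is identically $0$, because if $g(r_0)=a>0$ then for $r_0<r_1<r$ concavity gives $g(r_1)\geq\frac{r-r_1}{r-r_0}\,a+\frac{r_1-r_0}{r-r_0}\,g(r)$, and letting $r\to+\infty$ forces $g(r_1)\geq a$ for all $r_1>r_0$, contradicting $g(r)\to 0$. Applying this to $g=G\circ h^{-1}$ gives $G(t)=0$ for all $t\in(T,+\infty)$, and then $G(T)=\lim_{t\to T+0}G(t)=0$.

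Next, fix some $t>T$. By $G(t)=0$ there are holomorphic functions $\tilde f_j$ on $\{\psi<-t\}$ with $(\tilde f_j-f,o)\in\mathcal{I}(\varphi+\psi)_{o}$ and $\int_{\{\psi<-t\}}|\tilde f_j|^{2}e^{-\varphi}c(-\psi)\to 0$. On any compact $K\subset\{\psi<-t\}$ the weight $e^{-\varphi}c(-\psi)$ is bounded below by a positive constant: $\varphi$ is plurisubharmonic, hence bounded above on $K$, while $c\in\mathcal{Q}_{T}$ (so $c(s)e^{-s}$ is decreasing and $\liminf_{s\to+\infty}c(s)>0$) readily gives $c(s)\geq\delta'>0$ for all $s>t$, so $c(-\psi)\geq\delta'$ on $\{\psi<-t\}$. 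Therefore $\int_{K}|\tilde f_j|^{2}\to 0$ for each such $K$, i.e.\ $\tilde f_j\to 0$ in $L^2_{\mathrm{loc}}(\{\psi<-t\})$, and the interior estimate for holomorphic functions then yields $\tilde f_j\to 0$ locally uniformly on $\{\psi<-t\}$, in particular on a neighborhood of $o$.

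The final step, which I expect to be the only delicate point, is closedness. Multiplier ideal sheaves are coherent (Nadel \cite{Nadel90}), so $\mathcal{I}(\varphi+\psi)_{o}$ is closed in $\mathcal{O}_{o}$ for the topology of local uniform convergence near $o$. Since $\tilde f_j-f\in\mathcal{I}(\varphi+\psi)_{o}$ and $\tilde f_j-f\to -f$ locally uniformly near $o$, it follows that $-f\in\mathcal{I}(\varphi+\psi)_{o}$, i.e.\ $(f,o)\in\mathcal{I}(\varphi+\psi)_{o}$, which is the desired contradiction. (Alternatively, if one has available the existence of a minimizer $F_t$ of $G(t)$ whenever $G(t)<+\infty$, then $\int_{\{\psi<-t\}}|F_t|^{2}e^{-\varphi}c(-\psi)=G(t)=0$ forces $F_t\equiv 0$, and $(-f,o)=(F_t-f,o)\in\mathcal{I}(\varphi+\psi)_{o}$ at once.)
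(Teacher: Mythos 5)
Your proof is correct, and it uses essentially the approach the paper intends (the corollary is cited from \cite{GY-concavity}, and Theorem \ref{thm:concavity} plus the closedness of the stalk is exactly the machinery the paper itself deploys in the proof of Proposition \ref{p:DK}). The chain is: finiteness of $G$ at one point activates the concavity theorem; a nonnegative concave function on the half-line $(\beta,+\infty)$ tending to $0$ at $+\infty$ must vanish identically (here the divergence hypothesis $\int_{T_1}^{+\infty}c(t)e^{-t}dt=+\infty$ is exactly what makes the domain a full half-line); $G\equiv 0$ yields a minimizing sequence converging to $0$ locally uniformly near $o$, since $e^{-\varphi}c(-\psi)$ has a positive local lower bound; and closedness of the ideal $\mathcal{I}(\varphi+\psi)_o$ under local uniform convergence forces $(f,o)\in\mathcal{I}(\varphi+\psi)_o$, the desired contradiction.

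One small remark on the last step: you invoke Nadel's coherence theorem to get closedness of $\mathcal{I}(\varphi+\psi)_o$. This works, but it is heavier than needed and not quite what the paper does. The paper (in Proposition \ref{p:DK}) appeals instead to the elementary fact that \emph{every} ideal of the Noetherian local ring $\mathcal{O}_o$ is closed under the topology of compact convergence (Grauert--Remmert \cite{G-R}), which requires no information about $\mathcal{I}(\varphi+\psi)$ as a sheaf. Either route is valid; the Grauert--Remmert citation is the lighter one.
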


Let $p\in(0,+\infty)$. Let $f$ be a holomorphic function on pseudoconvex domain $D\subset\mathbb{C}^{n}$ containing the origin $o$, and let $I$ be an ideal of $\mathcal{O}_o$.  Denote that $C_{f,I,\Phi}(D)=\inf\{\int_{D}|\tilde f|^2e^{-\Phi}:(\tilde f-f,o)\in I\,\&\,\tilde f\in\mathcal{O}(D)\}$, where $\Phi$ is a plurisubharmonic function on $D$.

 Theorem \ref{thm:concavity} implies the following proposition (the case $p=2$ can be referred to \cite{guan_sharp}).
\begin{Proposition}
	\label{p:DK}Let $f$ be a holomorphic function on pseudoconvex domain $D\subset\mathbb{C}^{n}$ containing the origin $o$, and let $\varphi$ be a negative plurisubharmonic function on $D$.
	If $c_{o,p}^f(\varphi)<+\infty$, then
	\begin{equation}
		\label{eq:210820a}
		\frac{1}{r}\int_{\{2c_{o,p}^{f}(\varphi)\varphi<\log r\}}|f|^p\geq C_{f_1,\mathcal{I}(\varphi_1+2c_{o,p}^{f}(\varphi)\varphi)_o,\varphi_1}(D)>0
	\end{equation}
	holds for any $r\in(0,1)$, where $f_1=f^{\lceil\frac{p}{2}\rceil}$ and $\varphi_1=(2\lceil\frac{p}{2}\rceil-p)\log|f|$.
	\end{Proposition}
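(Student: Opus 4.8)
The plan is to reduce Proposition \ref{p:DK} to the concavity machinery of Theorem \ref{thm:concavity} by choosing the right auxiliary data. The key observation is that $L^p$ integrability of $|f|^p e^{-2c\varphi}$ can be re-expressed as $L^2$ integrability of a holomorphic function to an integral power times a weight: writing $m = \lceil p/2 \rceil$, we have $|f|^p = |f^m|^2 |f|^{p-2m}$, so $|f|^p e^{-2c\varphi} = |f_1|^2 e^{-(\varphi_1 + 2c\varphi)}$ with $f_1 = f^m$ and $\varphi_1 = (2m-p)\log|f| \geq 0$ (since $2m \geq p$). Thus $c_{o,p}^f(\varphi) = c_{o,2}^{f_1}(\varphi_1 + 2c\varphi)/\ldots$ — more precisely the condition $c_{o,p}^f(\varphi) < +\infty$ translates into a statement about the classical jumping number for the weight built from $\varphi_1$ and $\varphi$, and $(f_1, o) \notin \mathcal{I}(\varphi_1 + 2c_{o,p}^f(\varphi)\varphi)_o$ by definition of the jumping number (this is where the solution of the Demailly--Koll\'ar/Jonsson--Musta\c t\u a conjectures enters, guaranteeing the critical exponent is not attained, or rather that just below it integrability holds and the multiplier ideal behaves as expected).

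Next I would set up $G(t)$ from the preparation section with the choices $\varphi \rightsquigarrow \varphi_1$ (the weight inside $e^{-\varphi}$), $\psi \rightsquigarrow 2c_{o,p}^f(\varphi)\varphi$ (the defining function for the sublevel sets; note $\psi \le 0$ so $T = 0$), $c(t) \equiv 1$ (which lies in $\mathcal{Q}_T$), and $f \rightsquigarrow f_1$. Then $\{\psi < -t\} = \{2c_{o,p}^f(\varphi)\varphi < -t\}$, and substituting $r = e^{-t}$ converts the sublevel sets in \eqref{eq:210820a} into those of $\psi$. With $c \equiv 1$ we get $h(t) = \int_{T_1}^t e^{-l}\,dl$, so $h^{-1}(r)$ ranges over an interval with right endpoint corresponding to $\int_{T_1}^{+\infty} e^{-l}\,dl < +\infty$, and $G(h^{-1}(r))$ is concave in $r$ on this interval by Theorem \ref{thm:concavity}, with $\lim_{t\to+\infty} G(t) = 0$.

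The inequality \eqref{eq:210820a} should then fall out of the concavity plus the two boundary behaviors. A concave function on an interval that tends to $0$ at the right endpoint and is bounded below by its value at a left reference point lies above the chord/line through that reference point and $0$; comparing slopes, $\frac{1}{r}\int_{\{\psi < \log r\}} |f_1|^2 e^{-\varphi_1}$ — which up to the change of variables is essentially $G(t)/ (\text{normalizing factor in } r)$ — is monotone and bounded below by $C_{f_1, \mathcal{I}(\varphi_1 + 2c\varphi)_o, \varphi_1}(D)$, the latter being the $t \to T+0$ (i.e. $r \to 1$) limiting minimal $L^2$ integral over all of $D$. The strict positivity $C_{f_1, \dots}(D) > 0$ comes from Corollary \ref{infty} in the opposite direction, or simply because if it were zero we could produce a holomorphic $\tilde f$ with $(\tilde f - f_1, o) \in \mathcal{I}(\varphi_1 + 2c\varphi)_o$ and zero $L^2$ norm, forcing $\tilde f \equiv 0$ and hence $(f_1, o) \in \mathcal{I}(\varphi_1 + 2c\varphi)_o$, contradicting the choice of the critical exponent.

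The main obstacle I anticipate is the bookkeeping around the change of variables $r = e^{-t}$ and making the slope comparison rigorous at the endpoint $r=1$ (equivalently $t = T = 0$): one needs $\lim_{t\to T+0} G(t) = G(T)$ (supplied by Theorem \ref{thm:concavity}) and a clean identification of $G(T)$ with $C_{f_1, \mathcal{I}(\varphi_1 + 2c\varphi)_o, \varphi_1}(D)$, for which the sublevel set $\{\psi < 0\}$ must be shown to be all of $D$ up to a null set, or the definitions reconciled when $\varphi$ vanishes somewhere. The other delicate point is verifying that $c_{o,p}^f(\varphi) < +\infty$ genuinely yields $(f_1,o) \notin \mathcal{I}(\varphi_1 + 2c_{o,p}^f(\varphi)\varphi)_o$, i.e. that the $L^p$ critical exponent transfers correctly to the $L^2$ setting for the transformed data — this is exactly the content of the strong openness / effectiveness results of \cite{GZopen-effect} and is what licenses applying the concavity theorem with the critical weight rather than a subcritical one.
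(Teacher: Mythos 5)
Your reformulation $|f|^p e^{-2c\varphi} = |f_1|^2 e^{-(\varphi_1 + 2c_{o,p}^f(\varphi)\varphi)}$ and the application of Theorem \ref{thm:concavity} with $\varphi$ replaced by $\varphi_1$, $\psi$ by $2c_{o,p}^f(\varphi)\varphi$, $f$ by $f_1$, and $c\equiv 1$ match the paper's proof exactly. The concavity of $G(h^{-1}(r))$, the limit $\lim_{t\to+\infty}G(t)=0$, and the identification $G(0) = C_{f_1,\mathcal{I}(\varphi_1 + 2c_{o,p}^f(\varphi)\varphi)_o,\varphi_1}(D)$ (valid since $\{\psi<0\}=D$ because $\varphi<0$) combine to give the chord estimate $G(t)\geq e^{-t}G(0)$, and $t=-\log r$ yields \eqref{eq:210820a}. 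You also correctly flag that the non-membership $(f_1,o)\notin\mathcal{I}(\varphi_1+2c_{o,p}^f(\varphi)\varphi)_o$ at the critical exponent relies on strong openness.

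However, your argument for the strict positivity $C_{f_1,\mathcal{I}(\varphi_1+2c_{o,p}^f(\varphi)\varphi)_o,\varphi_1}(D)>0$ has a genuine gap. You write that if the infimum were zero one could ``produce a holomorphic $\tilde f$ with $(\tilde f-f_1,o)\in\mathcal{I}(\cdots)_o$ and zero $L^2$ norm, forcing $\tilde f\equiv 0$.'' But the infimum defining $C_{f_1,\cdots}(D)$ need not be attained; a zero infimum only furnishes a minimizing sequence $\{F_j\}$ with $\int_D|F_j|^2 e^{-\varphi_1}\to 0$. The paper closes this gap with a normal-families argument: since $e^{-\varphi_1}=|f|^{-(2\lceil p/2\rceil-p)}$ is locally bounded below on $D$, the vanishing $L^2$ norm forces $F_j\to 0$ uniformly on compacts after passing to a subsequence; then $(F_j-f_1,o)\to(-f_1,o)$ compactly, and the stalk $\mathcal{I}(\varphi_1+2c_{o,p}^f(\varphi)\varphi)_o$ being closed under compact convergence (Grauert--Remmert) gives $(f_1,o)\in\mathcal{I}(\varphi_1+2c_{o,p}^f(\varphi)\varphi)_o$, a contradiction. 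Your fallback of ``Corollary \ref{infty} in the opposite direction'' does not apply here either: with $c\equiv 1$ one has $\int_{T_1}^{+\infty}c(t)e^{-t}\,dt=e^{-T_1}<+\infty$, so the hypothesis of Corollary \ref{infty} fails. The sequential compactness step is the missing ingredient you need to add.
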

\begin{proof}
	It suffices to prove the case $\int_{\{2c_{o,p}^{f}(\varphi)\varphi<\log r\}}|f|^p<+\infty$.   Theorem \ref{thm:concavity} ($\varphi\sim\varphi_1$, $\psi\sim 2c_{o,p}^{f}(\varphi)\varphi$, $f\sim f_1$ and $c\sim1$, here $\sim$ means the former replaced by the latter) implies that
	\begin{equation}
		\label{eq:210820g}\begin{split}
		\int_{\{2c_{o,p}^{f}(\varphi)\varphi<-t\}}|f_1|^2e^{-\varphi_1}&\geq G(t)\\
		&\geq e^{-t}G(0)\\
		&=e^{-t} C_{f_1,\mathcal{I}(\varphi_1+2c_{o,p}^{f}(\varphi)\varphi)_o,\varphi_1}
		\end{split}	\end{equation}
		holds for any $t\geq0$.
		Taking $t=-\log r$, inequality \eqref{eq:210820g} becomes
		$$\frac{1}{r}\int_{\{2c_{o,p}^{f}(\varphi)\varphi<\log r\}}|f|^p\geq C_{f_1,\mathcal{I}(\varphi_1+2c_{o,p}^{f}(\varphi)\varphi)_o,\varphi_1}(D).$$
		
In the following, we prove $C_{f_1,\mathcal{I}(\varphi_1+2c_{o,p}^{f}(\varphi)\varphi)_o,\varphi_1}(D)>0$ by contradiction: 
if $C_{f_1,\mathcal{I}(\varphi_1+2c_{o,p}^{f}(\varphi)\varphi)_o,\varphi_1}(D)=0,$ 
there exist holomorphic function $\{F_j\}_{j=1,2,...}$ on $D$, such that $(F_j-f_1,o)\in\mathcal{I}(\varphi_1+2c_{o,p}^{f}(\varphi)\varphi)_o$ and $\lim_{j\rightarrow+\infty}\int_{D}|F_j|^2e^{-\varphi_1}=0$. 
As $e^{-\varphi_1}$ has locally positive lower bound on $D$,  there exists a subsequence of $\{F_j\}$  compactly convergent to $0$. 
It follows from the closedness of $\mathcal{I}(\varphi_1+2c_{o,p}^{f}(\varphi)\varphi)_o$ under the topology of compact convergence (see \cite{G-R}) that $(f_1,o)\in\mathcal{I}(\varphi_1+2c_{o,p}^{f}(\varphi)\varphi)_o$, 
which contradicts that $|f|^pe^{-2c_{o,p}^f(\varphi)\varphi}$ is not integrable near $o$. Hence, we obtain $C_{f_1,\mathcal{I}(\varphi_1+2c_{o,p}^{f}(\varphi)\varphi)_o,\varphi_1}(D)>0.$
\end{proof}
Choosing $(f,o)\in I\subseteq \mathcal{O}_{o}$ such that $c_{o,p}^I(\varphi)=c_{o,p}^f(\varphi)$, Proposition \ref{p:DK} implies the following result.
\begin{Corollary}\label{c:DK}
	Let $p\in(0,+\infty)$, and let $I$ be an ideal of $\mathcal{O}_o$. Let $\varphi$ be a negative plurisubharmonic function on $D$.
	If $c_{o,p}^I(\varphi)<+\infty$, then
	\begin{equation}
		\label{eq:210820b}
		\frac{1}{r}\int_{\{2c_{o,p}^{I}(\varphi)\varphi<\log r\}}|I|^p	\end{equation}has positive lower bounds independent of $r\in(0,1)$.
		\end{Corollary}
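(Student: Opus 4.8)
The plan is to deduce the ideal statement from the single-function statement of Proposition \ref{p:DK} by selecting a suitable element of $I$. Write $I=(f_1,\dots,f_l)$, so that $|I|^p=\max_{1\le j\le l}|f_j|^p$. Since $\varphi<0$, the function $e^{-2c\varphi}$ is nondecreasing in $c\ge 0$, so for each $j$ the set of $c$ for which $|f_j|^pe^{-2c\varphi}$ is integrable near $o$ is an interval; and from $\max_j|f_j|^p\le\sum_j|f_j|^p\le l\max_j|f_j|^p$ one sees that $|I|^pe^{-2c\varphi}$ is integrable near $o$ exactly when every $|f_j|^pe^{-2c\varphi}$ is. Taking suprema gives $c_{o,p}^{I}(\varphi)=\min_{1\le j\le l}c_{o,p}^{f_j}(\varphi)$. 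Choosing an index $j_0$ attaining this minimum and putting $f:=f_{j_0}$, we obtain $(f,o)\in I$ together with $c_{o,p}^{f}(\varphi)=c_{o,p}^{I}(\varphi)<+\infty$.

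Next I would apply Proposition \ref{p:DK} to this $f$ and the negative plurisubharmonic function $\varphi$. Because $c_{o,p}^{f}(\varphi)=c_{o,p}^{I}(\varphi)$, the relevant sublevel sets coincide, namely $\{2c_{o,p}^{f}(\varphi)\varphi<\log r\}=\{2c_{o,p}^{I}(\varphi)\varphi<\log r\}$ for every $r$, while pointwise $|I|^p=\max_j|f_j|^p\ge|f|^p$. Combining these facts with \eqref{eq:210820a} yields, for every $r\in(0,1)$,
$$\frac{1}{r}\int_{\{2c_{o,p}^{I}(\varphi)\varphi<\log r\}}|I|^p\ \ge\ \frac{1}{r}\int_{\{2c_{o,p}^{f}(\varphi)\varphi<\log r\}}|f|^p\ \ge\ C_{f_1,\mathcal{I}(\varphi_1+2c_{o,p}^{f}(\varphi)\varphi)_o,\varphi_1}(D),$$
where $f_1=f^{\lceil p/2\rceil}$ and $\varphi_1=(2\lceil p/2\rceil-p)\log|f|$. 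Proposition \ref{p:DK} guarantees that this last constant is strictly positive, and it manifestly does not depend on $r$; hence it is the desired positive lower bound, uniform in $r\in(0,1)$.

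The argument is almost entirely a reduction, so I do not expect a serious obstacle; the only point deserving attention is the first paragraph, where one must produce a single generator of $I$ realizing $c_{o,p}^{I}(\varphi)$. This is immediate here because $|I|^p$ is a maximum of the $|f_j|^p$ (hence, up to a bounded factor, a sum of them), so one of the generators $f_j$ already achieves the jumping number and no generic linear combination is required.
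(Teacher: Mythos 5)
Your proof is correct and takes essentially the same approach as the paper, which simply chooses $(f,o)\in I$ with $c_{o,p}^{f}(\varphi)=c_{o,p}^{I}(\varphi)$ and invokes Proposition~\ref{p:DK}. You have merely made explicit the existence of such an $f$ (via $c_{o,p}^{I}(\varphi)=\min_{j}c_{o,p}^{f_j}(\varphi)$ and taking a minimizing generator, which also yields the pointwise bound $|f|^p\le|I|^p$), a detail the paper leaves implicit.
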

The following Lemma (see \cite{GZopen-effect}) implies a solution of a conjecture posed by Jonsson and Musta\c{t}\u{a}. 	
\begin{Lemma}(see \cite{GZopen-effect})
	\label{l:JM0}
	Let $B\in(0,1]$. Let $\varphi$ be a negative plurisubharmonic function on pseudoconvex domain $D\subset\mathbb{C}^n$. Let $f$ be a bounded holomorphic function on $D$. Assume that $|f|^2e^{-\varphi}$ is not locally integrable near $z_0\in D$. Then we obtain that
	\begin{equation}
		\label{eq:210820d}
		\liminf_{R\rightarrow+\infty}e^{R}\frac{1}{B}\mu(\{-R-B<\varphi-2\log|f|<-R\})\geq\frac{C}{2e^{B}},
	\end{equation}
	where $C>0$ is a constant independent of $B$ and $\mu$ is the Lebesgue measure on $\mathbb{C}^n$.
\end{Lemma}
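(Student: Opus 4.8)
The plan is to turn the non-integrability of $|f|^{2}e^{-\varphi}$ near $z_{0}$ into a sharp lower bound for the volumes of the sublevel sets of $\varphi-2\log|f|$, and then to extract the window estimate \eqref{eq:210820d} from it by a monotonicity argument. First I would reduce to a local model: shrinking $D$, I may assume $D=\Omega$ is a small pseudoconvex neighbourhood of $z_{0}$ with $\int_{\Omega}|f|^{2}e^{-\varphi}=+\infty$, and, replacing $f$ by $\lambda f$ for a small constant $\lambda$ (which only translates the window in \eqref{eq:210820d} and rescales both sides by $|\lambda|^{2}$, hence is harmless for a constant $C$ that may depend on $(\Omega,\varphi,f)$ but not on $B$), I may assume $\sup_{\Omega}|f|<1$, so that $2\log|f|$ is a negative plurisubharmonic function on $\Omega$. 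Put $\psi_{L}:=\varphi-2\log|f|$ and $g(R):=\mu(\{\psi_{L}<-R\}\cap\Omega)$. From $\int_{\Omega}|f|^{2}e^{-\varphi}=\int_{\Omega}e^{-\psi_{L}}=\int_{-\infty}^{+\infty}e^{R}g(R)\,dR$ the hypothesis is equivalent to $\int_{0}^{+\infty}e^{R}g(R)\,dR=+\infty$ (equivalently, to $(f,z_{0})\notin\mathcal I(\varphi)_{z_{0}}$); and since $\mu(\{-R-B<\psi_{L}<-R\})=g(R)-g(R+B)$, the assertion \eqref{eq:210820d} becomes $\liminf_{R\to+\infty}e^{R}\bigl(g(R)-g(R+B)\bigr)\ge BC/(2e^{B})$.

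The heart of the argument is the inequality $g(R)\ge C_{0}e^{-R}$ for all large $R$, with $C_{0}>0$ independent of $B$. I would prove it by a minimal--$L^{2}$--integral argument adapted to the hypersurface $\{f=0\}$: taking $c\equiv1\in\mathcal Q_{T}$ and the auxiliary negative plurisubharmonic function $2\log|f|$, one forms the associated extremal function $G(t)$, which is finite for $t\ge0$ since an explicit extension exists; by Theorem \ref{thm:concavity}, $r\mapsto G(h^{-1}(r))$ is then concave with $\lim_{t\to+\infty}G(t)=0$, and concavity together with this limit forces $G(t)\ge e^{-t}G(0)$, exactly as in the derivation of \eqref{eq:210820g}. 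The positivity $G(0)>0$ is forced by $(f,z_{0})\notin\mathcal I(\varphi)_{z_{0}}$ and the closedness of the multiplier ideal under compact convergence, by the same contradiction as in the proof of Proposition \ref{p:DK}. Feeding $G(0)$ into the Ohsawa--Takegoshi extension theorem with optimal constant (B\l{}ocki; Guan--Zhou) then converts the $L^{2}$ lower bound into the measure lower bound $g(R)\ge C_{0}e^{-R}$, with $C_{0}$ a fixed positive multiple of $G(0)$, and the constant $C$ of the statement is a fixed numerical multiple of $C_{0}$.

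To pass from this sublevel estimate to the window estimate I would argue by contradiction. If $\liminf_{R\to\infty}e^{R}\bigl(g(R)-g(R+B)\bigr)<BC/(2e^{B})$ for some $B\in(0,1]$, then along a sequence $R_{j}\uparrow+\infty$ the width--$B$ windows of $\psi_{L}$ are too thin; writing $g(R_{j})=\sum_{k\ge0}\bigl(g(R_{j}+kB)-g(R_{j}+(k+1)B)\bigr)$, inserting the smallness of each summand and using the monotonicity of $g$, and summing the resulting geometric series, one gets $g(R_{j})<C_{0}e^{-R_{j}}$ for large $j$, contradicting the previous paragraph. In this bookkeeping the weight $\tfrac1{2e^{B}}$ is the product of two losses: a factor $e^{-B}$ from comparing $e^{R}$ with $e^{s}$ over a window $s\in[R,R+B]$ of width $B$, and a factor $\tfrac12$ from the geometric--series comparison passing from one window to the whole tail sublevel set.

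The step I expect to be the main obstacle is producing a \emph{strictly positive} $B$--independent constant $C_{0}$: since $\psi_{L}=\varphi-2\log|f|$ is only a difference of plurisubharmonic functions and need not itself be plurisubharmonic (it may tend to $+\infty$ along part of $\{f=0\}$), one cannot simply apply the effective Demailly-Koll\'{a}r statement to $\psi_{L}$, and one is forced into the sharp--constant Ohsawa--Takegoshi extension theorem rather than a soft functional--analytic closedness argument; this is the technical core of \cite{GZopen-effect}. A secondary subtlety is carrying out the last step uniformly over $B\in(0,1]$ so that the loss is captured exactly by the explicit factor $\tfrac1{2e^{B}}$.
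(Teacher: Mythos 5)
The paper does not prove Lemma \ref{l:JM0}; it is quoted verbatim from \cite{GZopen-effect}, so there is no in-paper argument to compare against, and I assess your proposal on its own. The central difficulty is that your logic runs in the wrong direction. You first argue for the sublevel bound $g(R)\ge C_{0}e^{-R}$ and then try to deduce the window estimate \eqref{eq:210820d} from it by contradiction. But that inference fails: if $\liminf_{R\to\infty}e^{R}\bigl(g(R)-g(R+B)\bigr)$ is small, all you obtain is a sequence $R_{j}\to\infty$ along which the \emph{single} term $g(R_{j})-g(R_{j}+B)$ is small; when you then expand $g(R_{j})=\sum_{k\ge0}\bigl(g(R_{j}+kB)-g(R_{j}+(k+1)B)\bigr)$ and "insert the smallness of each summand," you have no control over the terms with $k\ge1$, since $R_{j}+kB$ need not lie in the exceptional sequence. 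So the geometric-series bound is unjustified. In fact the sublevel estimate is strictly weaker: a decreasing $g$ with $g(R)\ge C_{0}e^{-R}$ everywhere can be constant on intervals $[s_{n},s_{n}+1]$ with $s_{n}\to\infty$ (set $g=e^{-s_{n}}$ there and $g=e^{-R}$ elsewhere), and then $\liminf_{R}e^{R}\bigl(g(R)-g(R+B)\bigr)=0$ for $B<1$. Notice also that the paper itself uses the lemma in precisely the opposite direction — lines \eqref{eq:210820e}--\eqref{eq:210820f} derive the sublevel bound \emph{from} the window estimate by summing the geometric series; nowhere is the converse claimed, and it is not true in general.

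The sublevel step itself is also not carried out. The extremal function $G(t)$ you set up takes $\psi=2\log|f|$, so it governs $\int_{\{2\log|f|<-t\}}|\tilde f|^{2}e^{-\varphi}$, a quantity tied to the sublevel sets of $\log|f|$ near $\{f=0\}$, not to the sets $\{\varphi-2\log|f|<-R\}$ whose volume is $g(R)$; and $\varphi-2\log|f|$ is not plurisubharmonic (as you observe), so the concavity machinery of Theorem \ref{thm:concavity} cannot be invoked for it directly. The clause "feeding $G(0)$ into the Ohsawa--Takegoshi extension theorem \dots converts the $L^{2}$ lower bound into the measure lower bound $g(R)\ge C_{0}e^{-R}$" is exactly the technical core of \cite{GZopen-effect} and cannot be left as a black box here. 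In short: even granting the sublevel inequality, your pigeonhole/geometric-series step does not yield the $\liminf$ window estimate; a correct proof must obtain \eqref{eq:210820d} directly, with the $B$-dependence built in, which is what the effective openness analysis of \cite{GZopen-effect} actually does.
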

		Taking $R=kB$ in inequality \eqref{eq:210820d}, for any given $\epsilon>0$, there exists $k_0$ depending on $B$, such that for any $k\geq k_0$, one can obtain that
		$$e^{(k+1)B}\frac{1}{B}\mu(\{-(k+1)B<\varphi-2\log|f|<-kB\})\geq \frac{C-\epsilon}{2},$$
		i.e.
		\begin{equation}
			\label{eq:210820e}\mu(\{-(k+1)B<\varphi-2\log|f|<-kB\})\geq e^{-(k+1)B}B\frac{C-\epsilon}{2}.		\end{equation}
Take sum $k\geq k_0$ in inequality \eqref{eq:210820e}, and let $B$ go to $0$, one can obtain that
\begin{equation}
	\label{eq:210820f}\liminf_{R\rightarrow+\infty}e^{R}\mu(\{\varphi-2\log|f|<-R\})\geq \frac{C}{2}.
\end{equation}		
		
Inequality \eqref{eq:210820f} implies the following Proposition, 
which will be used in the proof of Theorem \ref{thm:main}. 
\begin{Proposition}
	\label{p:JM}(see \cite{GZopen-effect})
	Let $p\in(0,+\infty)$, and let $I$ be an ideal of $\mathcal{O}_o$. 
Let $\varphi$ be a negative plurisubharmonic function on pseudoconvex domain $D$ (the origin $o\in D$).
	If $c_{o,p}^I(\varphi)<+\infty$, then
	\begin{equation}
		\label{eq:210820c}
		\frac{1}{r}\mu(\{2c_{o,p}^{I}(\varphi)\varphi-p\log|I|<\log r\})
\end{equation}has positive lower bounds independent of $r\in(0,1)$.
\end{Proposition}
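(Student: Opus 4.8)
The plan is to reduce Proposition \ref{p:JM} to the measure estimate \eqref{eq:210820f}, which is already derived from Lemma \ref{l:JM0}, by choosing a single generator of $I$ that computes the jumping number and by handling the exponent $p$ via a suitable power of that generator. First I would pick a holomorphic germ $(f,o)\in I$ with $c_{o,p}^I(\varphi)=c_{o,p}^f(\varphi)$; such an $f$ exists because $|I|^p$ is comparable near $o$ to $\sum_j|f_j|^p$, and the supremum defining $c_{o,p}^I(\varphi)$ is attained (in the sense of equality of jumping numbers) by at least one generator, say after shrinking $D$ so that $f$ is bounded. Set $c:=c_{o,p}^I(\varphi)\in(0,+\infty)$. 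Since $|f|^p\leq|I|^p$ near $o$, it suffices to prove that $\tfrac1r\mu(\{2c\varphi-p\log|f|<\log r\})$ has a positive lower bound independent of $r\in(0,1)$, because $\{2c\varphi-p\log|f|<\log r\}\subseteq\{2c\varphi-p\log|I|<\log r\}$ up to a null set and a fixed shrinking of the domain.

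The next step is to renormalize the pair $(\varphi,f)$ so that inequality \eqref{eq:210820f} applies. Write $p=2\lceil p/2\rceil-(2\lceil p/2\rceil-p)$ and put $m:=\lceil p/2\rceil$, $f_1:=f^{m}$, $\varphi_1:=2c\varphi+(2m-p)\log|f|$. Then $\varphi_1$ is plurisubharmonic (it is a sum of a psh function and a nonnegative multiple of $\log|f|$, using $2m-p\geq0$), negative after a harmless shrinking and rescaling, and $|f_1|^2e^{-\varphi_1}=|f|^{2m}e^{-2c\varphi-(2m-p)\log|f|}=|f|^{p}e^{-2c\varphi}$, which by the very definition of $c=c_{o,p}^f(\varphi)$ is not locally integrable near $o$. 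Hence Lemma \ref{l:JM0}, and therefore \eqref{eq:210820f}, applies to the pair $(\varphi_1,f_1)$: we get
\begin{equation*}
\liminf_{R\to+\infty}e^{R}\mu(\{\varphi_1-2\log|f_1|<-R\})\geq\frac{C}{2}>0.
\end{equation*}
Now observe that $\varphi_1-2\log|f_1|=2c\varphi+(2m-p)\log|f|-2m\log|f|=2c\varphi-p\log|f|$, so the displayed inequality is exactly a positive lower bound for $e^{R}\mu(\{2c\varphi-p\log|f|<-R\})$ as $R\to+\infty$. Substituting $R=-\log r$ turns this into a positive lower bound for $\tfrac1r\mu(\{2c\varphi-p\log|f|<\log r\})$ for all $r$ in a punctured neighborhood of $0$, and then for all $r\in(0,1)$ after possibly shrinking the constant, since on a compact piece of $(0,1)$ the quantity is bounded below by a positive constant as well (the sets are nonempty and the measure is continuous from below).

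The main obstacle I anticipate is the bookkeeping needed to legitimately pass from the ideal $I$ to a single bounded generator $f$ realizing the jumping number, together with the shrinking/normalization that makes $\varphi_1$ negative and bounded-from-above as required by Lemma \ref{l:JM0}; none of this is deep, but it must be done carefully so that the inclusions of sublevel sets and the comparability $|f|^p\lesssim|I|^p\lesssim\sum_j|f_j|^p$ are valid on the same neighborhood on which the liminf estimate is extracted. A secondary point to check is that $\varphi_1$ genuinely has $c_{o,2}^{f_1}(\varphi_1)<+\infty$ (equivalently that $|f_1|^2e^{-\varphi_1}$ is non-integrable), which is immediate from the identity $|f_1|^2e^{-\varphi_1}=|f|^pe^{-2c\varphi}$ and the finiteness of $c_{o,p}^f(\varphi)$, but it is worth stating explicitly so that Lemma \ref{l:JM0} is invoked under its exact hypotheses. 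Once these reductions are in place, the proof is just the chain of identities above followed by the change of variable $R=-\log r$.
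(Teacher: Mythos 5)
Your proof takes the same route as the paper: pick a single generator $f$ of $I$ realizing the jumping number, pass to $f_1=f^{\lceil p/2\rceil}$ with a compensating $\log|f|$ term in the weight so that $|f_1|^2e^{-\varphi_1}=|f|^pe^{-2c\varphi}$, apply \eqref{eq:210820f} to the pair $(\varphi_1,f_1)$, and substitute $R=-\log r$. The algebra $\varphi_1-2\log|f_1|=2c\varphi-p\log|f|$ is the same identity the paper uses, and the reductions (shrink so $f$ is bounded, so $\varphi_1<0$, and so $|f|^p\leq|I|^p$ gives the inclusion of sublevel sets) are all fine.

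The one point you should correct is the justification that $|f|^pe^{-2c\varphi}=|f_1|^2e^{-\varphi_1}$ is \emph{not} locally integrable at $c=c_{o,p}^f(\varphi)$. You write that this holds ``by the very definition of $c=c_{o,p}^f(\varphi)$'' (and again, in the last paragraph, that it is ``immediate from \dots the finiteness of $c_{o,p}^f(\varphi)$''). That is not right: $c_{o,p}^f(\varphi)$ is defined as a supremum, and the definition alone does not rule out the supremum being attained with finite integral. Non-integrability at the jumping number is precisely the content of the strong openness property in $L^p$ (Forn{\ae}ss' statement quoted in Section~1), and the paper invokes it explicitly at this step. The same remark applies to your selection of $f$: you need strong openness to know that $|I|^pe^{-2c_{o,p}^I(\varphi)\varphi}$ is non-integrable before the comparability $|I|^p\asymp\sum_j|f_j|^p$ can hand you a single non-integrable generator. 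With ``by definition'' replaced by ``by strong openness in $L^p$'', your argument matches the paper's proof.
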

\begin{proof}

By the strong openness property, 
we have $|I|^pe^{-2c_{o,p}^I(\varphi)\varphi}$ is not integrable on any neighborhood of $o$. 
There exists $(f,o)\in I$ such that $|f|^pe^{-2c_{o,p}^I(\varphi)\varphi}$ is not integrable on any neighborhood of $o$.

Denote that $f_1=f^{\lceil\frac{p}{2}\rceil}$ and $\varphi_1=(2\lceil\frac{p}{2}\rceil-p)\log|f|$, 
then $|f_1|^2e^{-\varphi_1-2c_{o,p}^I(\varphi)\varphi}$ is not integrable on any neighborhood of $o$.  
Replacing $\varphi$ by $2c_{o,p}^I(\varphi)\varphi+\varphi_1$, and 
$f$ by $f_1$ and $R$ by $-\log r$ in inequality \eqref{eq:210820f}, we obtain that
 \begin{displaymath}
 	\begin{split}
 		 &\liminf_{r\rightarrow0}\frac{1}{r}\mu(\{2c_{o,p}^I(\varphi)\varphi-p\log|f|<\log r\})\\
 		 =&\liminf_{r\rightarrow0}\frac{1}{r}\mu(\{2c_{o,p}^I(\varphi)\varphi+\varphi_1-2\log|f_1|<\log r\})\\
 		 >&0,
 		  	\end{split}
 \end{displaymath}
which implies that $\mu(\{2c_{o,p}^{I}(\varphi)\varphi-p\log|I|<\log r\})$ has positive lower bounds independent of $r\in(0,1)$.
\end{proof}

The following two lemmas will be used to prove Theorem \ref{thm:main}.
\begin{Lemma}\label{l:2}
Let $a(t)$ be a positive measurable function on $(-\infty,+\infty)$, 
such that $a(t)e^{t}$ is increasing near $+\infty$, 
and $a(t)$ is not integrable near $+\infty$.  
Then there exists a positive measurable function $\tilde a(t)$ on $(-\infty,+\infty)$ satisfying the following statements:
		
		$(1)$ there exists $T<+\infty$ such that $\tilde{a}(t)\leq a(t)$ for any $t>T$;
		
		$(2)$  $\tilde a(t)e^{t}$ is strictly increasing and continuous near $+\infty$;
		
		$(3)$ $\tilde a(t)$ is not integrable near $+\infty$.
\end{Lemma}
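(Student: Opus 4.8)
The plan is to build $\tilde a$ in two moves: first average $a(t)e^{t}$ to gain continuity, then subtract a tiny strictly decreasing ``bump'' to turn weak monotonicity into strict monotonicity without disturbing the other properties.

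First I would set $b(t):=a(t)e^{t}$; by hypothesis there is $T_{0}$ with $b$ increasing, finite and positive on $[T_{0},+\infty)$, hence locally bounded and locally integrable there. Put $T:=T_{0}+1$ and, for $t\geq T$, let $\tilde b(t):=\int_{t-1}^{t}b(s)\,ds$. Since $b$ is increasing on $[t-1,t]$ this gives $b(t-1)\leq\tilde b(t)\leq b(t)$, and writing $\tilde b(t)=B(t)-B(t-1)$ with $B(u):=\int_{T_{0}}^{u}b$ shows $\tilde b$ is locally Lipschitz (so continuous) on $[T,+\infty)$ with $\tilde b'(t)=b(t)-b(t-1)\geq 0$ for a.e.\ $t$. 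From $\tilde b\leq b$ we get $\tilde b(t)e^{-t}\leq a(t)$; from $\tilde b(t)\geq b(t-1)$ we get, after the substitution $u=t-1$, $\int_{T}^{+\infty}\tilde b(t)e^{-t}\,dt\geq e^{-1}\int_{T_{0}}^{+\infty}b(u)e^{-u}\,du=e^{-1}\int_{T_{0}}^{+\infty}a(u)\,du=+\infty$; and $\tilde b(t)\geq b(t-1)\geq b(T_{0})=:m>0$. So $\tilde b(t)e^{-t}$ already satisfies (1), (3) and the continuity part of (2); the only missing ingredient is that $\tilde b$ need not be \emph{strictly} increasing.

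To repair this I would fix a strictly decreasing, positive, bounded function $\epsilon$ with $\epsilon<m/2$ on $[T,+\infty)$ and $\int_{T}^{+\infty}\epsilon(t)e^{-t}\,dt<+\infty$ --- for instance $\epsilon(t)=\tfrac{m}{2}e^{-(t-T)}$ --- and define $\tilde a(t):=(\tilde b(t)-\epsilon(t))e^{-t}$ for $t\geq T$ and $\tilde a(t):=\tilde a(T)$ for $t<T$. On $[T,+\infty)$ one has $0<m/2\leq\tilde b-\epsilon\leq\tilde b\leq b$, so $\tilde a$ is positive there and $\tilde a\leq a$, which is (1); the function $\tilde b-\epsilon$ is absolutely continuous with $(\tilde b-\epsilon)'=\tilde b'-\epsilon'\geq-\epsilon'>0$ a.e., hence strictly increasing and continuous, so $\tilde a(t)e^{t}=\tilde b(t)-\epsilon(t)$ is strictly increasing and continuous near $+\infty$, which is (2); and $\int_{T}^{+\infty}\tilde a(t)\,dt=\int_{T}^{+\infty}\tilde b(t)e^{-t}\,dt-\int_{T}^{+\infty}\epsilon(t)e^{-t}\,dt=+\infty$, which is (3).

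The hard part --- really the only step needing any thought --- is the second one: to keep $\tilde a\leq a$ the correction must be subtracted (pushing $\tilde b$ down, not up), to force an a.e.\ strictly positive derivative the correction's derivative must be strictly negative everywhere, and yet the correction must stay small enough both to keep $\tilde b-\epsilon$ positive and to be integrable against $e^{-t}$ so that the divergence of $\int\tilde a$ is preserved; a bounded, strictly decreasing exponential meets all three constraints at once. Everything else is a routine verification.
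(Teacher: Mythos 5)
Your proof is correct, and it takes a genuinely different route from the paper's. The paper works at the integer lattice: it chooses a sequence $\{b_n\}$ with $b_{n+1}\le a(n)$, $b_n<b_{n+1}e$, and $\sum b_n=+\infty$, and then defines $\tilde a(t)=\frac{b_n}{e^2}\bigl(\frac{b_{n+1}}{b_n}\bigr)^{t-n}$ on each $[n,n+1)$, a piecewise geometric interpolation that makes $\tilde a(t)e^t$ piecewise exponential, hence automatically continuous and strictly increasing. You instead work directly with $b=ae^{t}$: the unit moving average $\tilde b(t)=\int_{t-1}^{t}b$ gives continuity and weak monotonicity while preserving both the domination $\tilde b\le b$ (since $b$ is increasing) and the divergence of $\int\tilde b(t)e^{-t}\,dt$ (via $\tilde b(t)\ge b(t-1)$ and a shift), and the uniform lower bound $\tilde b\ge b(T_0)>0$ leaves room to subtract a small bounded strictly decreasing exponential, upgrading weak to strict monotonicity without disturbing positivity, domination, or divergence. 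Both arguments are sound; yours is conceptually cleaner (mollification plus a monotone perturbation is a standard pattern), while the paper's is more elementary and fully explicit, avoiding any appeal to absolute continuity or a.e.\ differentiation. One point worth making explicit in your write-up: the step $\tilde b'(t)=b(t)-b(t-1)$ a.e.\ requires $b$ to be locally integrable on $[T_0,\infty)$, which you correctly justify by noting that an increasing, finite, positive function is locally bounded --- this is the one place the argument leans on $a$ (hence $b$) being finite-valued, an assumption the paper's construction also tacitly uses.
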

\begin{proof}
	There exists $T\in\mathbb{Z}$ such that $a(t)e^{t}$ is increasing on $(T-1,+\infty)$. As $a(t)$ is not integrable near $+\infty$ and $a(t)e^{t}$ is increasing on $(T-1,+\infty)$, we have $\sum_{n=T-1}^{+\infty}a(n)=+\infty$ and $a(n)\leq a(n+1)e$.
Then there exists a sequence of real numbers $\{b_n\}_{n=T}^{+\infty}$ satisfying that: $(1)$ $\sum_{n=T}^{+\infty}b_n=+\infty$; $(2)$ $b_{n+1}\leq a(n)$ for any $n\geq T-1$; $b_n<b_{n+1}e$ for any $n\geq T$.

Take $$\tilde a(t)=\left\{\begin{array}{lcl}
	
		\frac{b_n}{e^2}(\frac{b_{n+1}}{b_n})^{t-n} & \mbox{if} & t\in[n,n+1)\subset[T,+\infty),\\
		1 &\mbox{if} & t\in(-\infty,T).
		\end{array}
	\right.$$
Now, we prove that $\tilde a$ satisfies the three statements in Lemma \ref{l:2}.  Following from $b_n<b_{n+1}e$, $b_{n+1}\leq a(n)$ for any $n\geq T-1$ and $a(t)e^{t}$ is increasing on $(T-1,+\infty)$, we obtain that $\tilde a(t)\leq\frac{1}{e^2}\max{\{b_n,b_{n+1}\}}\leq \frac{b_{n+1}}{e}\leq\frac{a(n)}{e}\leq a(t)$  for any $t\in[n,n+1)\subset[T,+\infty)$. As $b_n<b_{n+1}e$, then $\tilde a(t)e^{t}$ is strictly increasing  near $+\infty$. The continuity of $\tilde a(t)$ on $(T,+\infty)$ is  just from the construction of $\tilde a(t)$. Note that
\begin{displaymath}
	\int_{N}^{+\infty}\tilde a(t)dt\geq\sum_{n=N}^{+\infty}\frac{1}{e^2}\min{\{b_n,b_{n+1}\}}\geq\sum_{n=N}^{+\infty}\frac{b_{n}}{e^3}=+\infty
\end{displaymath}
holds for any integer $N\geq T$.

Thus, Lemma \ref{l:2} holds.
\end{proof}

\begin{Lemma}
	\label{l:m}(see \cite{GZ-soc17}) For any two measurable spaces $(X_i,\mu_i)$ and two measurable functions $g_i$ on $X_i$, respectively ($i\in\{1,2\}$), if $\mu_1(\{g_1\geq r^{-1}\})\geq\mu_2(\{g_2\geq r^{-1}\})$ for any $r\in(0,r_0]$, then $\int_{\{g_1\geq r_0^{-1}\}}g_1d\mu_1\geq\int_{\{g_2\geq r_0^{-1}\}}g_2d\mu_2$.
\end{Lemma}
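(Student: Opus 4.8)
The plan is to rewrite each side as an integral of the corresponding distribution function (the layer-cake / Cavalieri formula) and then compare the two integrands using the hypothesis.

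First I would reformulate the assumption. Put $a := r_0^{-1} > 0$; as $r$ ranges over $(0,r_0]$ the quantity $r^{-1}$ ranges over $[a,+\infty)$, so the hypothesis $\mu_1(\{g_1 \ge r^{-1}\}) \ge \mu_2(\{g_2 \ge r^{-1}\})$ for $r \in (0,r_0]$ says precisely
\[
\mu_1(\{g_1 \ge s\}) \ge \mu_2(\{g_2 \ge s\}) \quad \text{for every } s \in [a,+\infty).
\]
If $\mu_1(\{g_1 \ge a\}) = +\infty$ then the left-hand side of the conclusion is $+\infty$ and there is nothing to prove, so I may assume $\mu_1(\{g_1 \ge a\}) < +\infty$, and then $\mu_2(\{g_2 \ge a\}) < +\infty$ as well by the displayed inequality at $s=a$.

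Next, for $i \in \{1,2\}$ note that $g_i > 0$ on $\{g_i \ge a\}$, so writing $g_i(x) = \int_0^{+\infty}\chi_{\{s < g_i(x)\}}\,ds$ there and applying Tonelli's theorem gives
\[
\int_{\{g_i \ge a\}} g_i \, d\mu_i = \int_0^{+\infty}\mu_i\big(\{g_i \ge a\}\cap\{g_i > s\}\big)\,ds = a\,\mu_i(\{g_i \ge a\}) + \int_a^{+\infty}\mu_i(\{g_i > s\})\,ds,
\]
where the last equality splits the $s$-integral at $s=a$. Since $\mu_i$ restricted to the finite-measure set $\{g_i \ge a\}$ has at most countably many atoms, $\mu_i(\{g_i > s\}) = \mu_i(\{g_i \ge s\})$ for Lebesgue-almost every $s \ge a$; hence the last integral equals $\int_a^{+\infty}\mu_i(\{g_i \ge s\})\,ds$.

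Finally I would compare term by term using the reformulated hypothesis: $a\,\mu_1(\{g_1 \ge a\}) \ge a\,\mu_2(\{g_2 \ge a\})$ and $\int_a^{+\infty}\mu_1(\{g_1 \ge s\})\,ds \ge \int_a^{+\infty}\mu_2(\{g_2 \ge s\})\,ds$. Adding these yields $\int_{\{g_1 \ge r_0^{-1}\}} g_1\,d\mu_1 \ge \int_{\{g_2 \ge r_0^{-1}\}} g_2\,d\mu_2$, which is the assertion. There is no real obstacle here; the argument is essentially one application of Tonelli's theorem. The only two points that need a word of care are the infinite-measure case (disposed of at the start) and the distinction between $\{g_i > s\}$ and $\{g_i \ge s\}$ in the distribution function (handled by the almost-everywhere equality above, or equivalently by continuity of $\mu_i$ from below).
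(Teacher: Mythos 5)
The paper states this lemma without proof, citing \cite{GZ-soc17}, so there is no in-paper argument to compare against; your layer-cake argument must stand on its own, and it does. The reformulation of the hypothesis, the reduction to the finite-measure case, the Tonelli computation
\[
\int_{\{g_i \ge a\}} g_i \, d\mu_i = a\,\mu_i(\{g_i \ge a\}) + \int_a^{+\infty}\mu_i(\{g_i > s\})\,ds,
\]
and the term-by-term comparison are all correct, and this is the natural argument for a distribution-function comparison of this type. One small imprecision: the clause ``$\mu_i$ restricted to the finite-measure set $\{g_i\ge a\}$ has at most countably many atoms'' is not quite the relevant statement. An atom of $\mu_i$ is a point $x$ with $\mu_i(\{x\})>0$, whereas what you actually need is that $\mu_i(\{g_i=s\})>0$ for at most countably many $s$, i.e.\ that the pushforward measure $(g_i)_*\bigl(\mu_i|_{\{g_i\ge a\}}\bigr)$ on $[a,+\infty)$, being finite, has at most countably many atoms; this follows because the level sets $\{g_i=s\}$ are pairwise disjoint subsets of a set of finite $\mu_i$-measure. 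Your parenthetical alternative at the end is in fact cleaner and avoids the a.e.\ discussion entirely: by continuity of $\mu_i$ from below one has $\mu_i(\{g_i>s\})=\lim_{t\downarrow s}\mu_i(\{g_i\ge t\})$ for every $s$, so $\mu_1(\{g_1\ge t\})\ge\mu_2(\{g_2\ge t\})$ for all $t\ge a$ immediately yields $\mu_1(\{g_1>s\})\ge\mu_2(\{g_2>s\})$ for every $s\ge a$, and the integral comparison follows with no null-set caveats.
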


\section{Proofs of Theorem \ref{thm:main} and corollary \ref{c:c}}

In this section, we prove Theorem \ref{thm:main} and Corollary \ref{c:c}.
\begin{proof}[Proof of Theorem \ref{thm:main}]
 We prove Theorem \ref{thm:main} in two cases, that $a(t)$ satisfies condition $(1)$ or condition $(2)$.

\

\emph{Case $(1)$. $a(t)$ is decreasing near $+\infty$.}

\

Firstly, we prove $(B)\Rightarrow(A)$ and $(C)\Rightarrow(A)$.
Consider $I=1$ and $\varphi=\log|z_1|$ on the unit polydisc $\Delta^n\subset\mathbb{C}^n$. Note that $c_{o,p}^1(\log|z_1|)=1$ and
\begin{displaymath}
	\begin{split}
		\int_{\Delta_{r_0}^n}a(-2\log|z_1|)\frac{1}{|z_1|^2}=&(\pi r_0^2)^{n-1}\int_{\Delta_{r_0}}a(-2\log|z_1|)\frac{1}{|z_1|^2}\\
		=&(\pi r_0^2)^{n-1}2\pi\int_0^{r_0}a(-2\log r)r^{-1}dr\\
		=&(\pi r_0^2)^{n-1}\pi\int_{-2\log{r_0}}^{+\infty}a(t)dt,
	\end{split}
\end{displaymath}
then we obtain $(B)\Rightarrow (A)$ and $(C)\Rightarrow (A)$.

Then, we prove $(A)\Rightarrow(B)$.
The strong openness property shows that there exists $(f,o)\in I$ such that $|f|^pe^{-2c_{o,p}^I(\varphi)\varphi}$ is not integrable near $o$. It suffices to prove that $|f|^pe^{-2c_{o,p}^I(\varphi)\varphi}a(-2c_{o,p}^I(\varphi)\varphi)$ is not integrable near $o$.

 Denote that $f_1=f^{\lceil \frac{p}{2}\rceil}$ and $\varphi_1=(2\lceil \frac{p}{2}\rceil-p)\log|f|$, where $\lceil m\rceil:=\min\{n\in\mathbb{Z}:n\geq m\}$. Note that $|f|^pe^{-2c_{o,p}^I(\varphi)\varphi}=|f_1|^2e^{-\varphi_1-2c_{o,p}^I(\varphi)\varphi}$ and it's not integrable near $o$.  Then we assume that $|f_1|^2e^{-\varphi_1-2c_{o,p}^I(\varphi)\varphi}a(-2c_{o,p}^I(\varphi)\varphi)$ is  integrable near $o$ to get a contradiction. Set $c(t)=a(t)e^{t}+1$. There exists a pseudoconvex domain $D\subset\mathbb{C}^n$ containing the origin $o$ such that $\int_D|f_1|^2e^{-\varphi_1-2c_{o,p}^I(\varphi)\varphi}a(-2c_{o,p}^I(\varphi)\varphi)<+\infty$, $\int_D|f_1|^2e^{-\varphi_1}=\int_D|f|^p<+\infty$  and $c(t)e^{-t}=a(t)+e^{-t}$ is decreasing on $(T,+\infty)$, where $T=-\sup_{D}2c_{o,p}^I(\varphi)\varphi$. Note that $\liminf_{t\rightarrow+\infty}c(t)>0$, then $c\in\mathcal{Q}_T$. As $a(t)$ is not integrable near $+\infty$, so is $c(t)e^{-t}$. Using Corollary \ref{infty} ($f$, $\varphi$ and $\psi$ are replaced by $f_1$, $\varphi_1$ and $2c_{o,p}^I(\varphi)\varphi$, respectively), as $(f_1,o)\not\in\mathcal{I}(\varphi_1+2c_{o,p}^I(\varphi)\varphi)_o$, then we have $G(T)=+\infty$. By the definition of $G(T)$, we obtain that
 \begin{displaymath}
 	\begin{split}
 	G(T)&\leq\int_D|f_1|^2e^{-\varphi_1}c(-2c_{o,p}^I(\varphi)\varphi)\\
 	&=\int_D|f_1|^2e^{-\varphi_1-2c_{o,p}^I(\varphi)\varphi}a(-2c_{o,p}^I(\varphi)\varphi)+\int_D|f_1|^2e^{-\varphi_1}\\
 	&<+\infty
 		 	\end{split}
 \end{displaymath}
which contradicts to $G(T)=+\infty$. Thus, we obtain $(A)\Rightarrow (B)$.

Finally, we prove $(A)\Rightarrow (C)$.
If $I=1$, following from the above discussion, we have $(C)$ holds. Thus, it is suffices to consider the case $I\not=1$.

If $a(t)$ is decreasing near $+\infty$, we have $a(-2c_{o,p}^{I}(\varphi)\varphi)\exp(-2c_{o,p}^{I}(\varphi)\varphi+p\log|I|)$ is not integrable near $o$ for any plurisubharmonic function $\varphi$ on $D$ and $I$ satisfying $c_{o,p}^{I}(\varphi)<+\infty$. Note that there exists a small neighborhood $D'$ of $o$ such that $\log|I|<0$ on $D'$, then $a(t)$ is decreasing near $+\infty$ implies that
$$a(-2c_{o,p}^{I}(\varphi)\varphi+p\log|I|)\geq a(-2c_{o,p}^{I}(\varphi)\varphi).$$
Thus we obtain that $a(-2c_{o,p}^{I}(\varphi)\varphi+p\log|I|)\exp(-2c_{o,p}^{I}(\varphi)\varphi+p\log|I|)$ is not integrable near $o$.

Thus, we prove Theorem \ref{thm:main} for the case that $a(t)$ satisfies condition $(1)$.

\

\emph{Case $(2)$. $a(t)e^{t}$ is increasing near $+\infty$.}

\

In this case, the proofs of $(B)\Rightarrow(A)$ and $(C)\Rightarrow(A)$ are the same as the case $(1)$, therefore it suffices to prove $(A)\Rightarrow(B)$ and $(A)\Rightarrow(C)$.

Assume that statement $(A)$ holds.
 It follows from Lemma \ref{l:2} that there exists a positive function $\tilde a(t)$ on $(-\infty,+\infty)$ satisfying that: $\tilde a(t)\leq a(t)$ near $+\infty$; $\tilde a(t)e^{t}$ is strictly increasing and continuous near $+\infty$; $\tilde a(t)$ is not integrable near $+\infty$. Thus, it suffices to prove that $\tilde a(-2c_{o,p}^I(\varphi)\varphi)\exp(-2c_{o,p}^I(\varphi)\varphi+p\log|I|)$ and $\tilde a(-2c_{o,p}^I(\varphi)\varphi+p\log|I|)\exp(-2c_{o,p}^I(\varphi)\varphi+p\log|I|)$ are not integrable near $o$ for any $\varphi$ and $I$ satisfying $c_{o,p}^I(\varphi)<+\infty.$

Fitstly, we prove $\tilde a(-2c_{o,p}^I(\varphi)\varphi)\exp(-2c_{o,p}^I(\varphi)\varphi+p\log|I|)$ is not integrable near $o$ for any $\varphi$ and $I$ satisfying $c_{o,p}^I(\varphi)<+\infty$ by using Corollary \ref{c:DK} and Lemma \ref{l:m}.

Let $X_1$ be a small neighborhood of $o$, and let $X_2=(0,1]$. Let $\mu_1(\cdot)=\int_{\cdot}|I|^2$,  and let $\mu_2$ be the Lebeague measure on $X_2$. Denote that $Y_r=\{-2c_{o,p}^I(\varphi)\varphi\geq -\log r\}$. Corollary \ref{c:DK} shows that there exists a positive constant $C$ such that $\mu_1(Y_r)\geq Cr$ holds for any $r\in(0,1]$.

Let $g_1=\tilde a(-2c_{o,p}^I(\varphi)\varphi)\exp(-2c_{o,p}^I(\varphi)\varphi)$ and $g_2(x)=\tilde a(-\log x+\log C)Cx^{-1}$. As $\tilde a(t)e^{-t}$ is increasing near $+\infty$, then $g_1\geq \tilde a(-\log r)r^{-1}$ on $Y_r$ implies that
\begin{equation}
	\label{eq:210820h}
	\mu_1(\{g_1\geq \tilde a(-\log r)r^{-1}\})\geq\mu_1(Y_r)\geq Cr
\end{equation}
holds for any  $r>0$ small enough.

As $\tilde a(t)e^{t}$ is strictly increasing near $+\infty$, then there exists $r_0\in(0,1)$ such that
\begin{equation}
	\label{eq:210820i}
	\mu_2(\{x\in(0,r_0]:g_2(x)\geq \tilde a(-\log r)r^{-1}\})=\mu_2(\{0<x\leq Cr\})=C r
\end{equation}
for any $r\in(0,r_0]$.

Using the continuity of $\tilde a(-\log r)r^{-1}$ and $\tilde a(-\log r)r^{-1}$ converges to $+\infty$ ( when $r\rightarrow0+0$), we obtain that
$$\mu_1(\{g_1\geq r^{-1}\})\geq\mu_2(\{x\in(0,r_0]:g_2(x)\geq r^{-1}\})$$
holds for any $r>0$ small enough. Following from Lemma \ref{l:m} and $\tilde a(t)$ is not integrable near $+\infty$, we obtain $\tilde a(-2c_{o,p}^I(\varphi)\varphi)\exp(-2c_{o,p}^I(\varphi)\varphi+p\log|I|)$ is not integrable near $o$.

Then,  we prove $\tilde a(-2c_{o,p}^I(\varphi)\varphi+p\log|I|)\exp(-2c_{o,p}^I(\varphi)\varphi+p\log|I|)$ is not integrable near $o$ for any $\varphi$ and $I$ satisfying $c_{o,p}^I(\varphi)<+\infty$ by using Proposition \ref{p:JM} and Lemma \ref{l:m}.

Let $X_1$ be a small neighborhood of $o$, and let $X_2=(0,1]$. Let $\mu_1$ and $\mu_2$ be the Lebeague measure on $X_1$ and $X_2$, respectively. Let $Y_r=\{-2c_{o,p}^I(\varphi)\varphi+p\log|I|\geq -\log r\}$. Proposition \ref{p:JM} shows that there exists a positive constant $C$ such that $\mu_1(Y_r)\geq Cr$ holds for any $r\in(0,1]$.

Let $g_1=\tilde a(-2c_{o,p}^I(\varphi)\varphi+p\log|I|)\exp(-2c_{o,p}^I(\varphi)\varphi+p\log|I|)$ and $g_2(x)=\tilde a(-\log x+\log C)Cx^{-1}$. As $\tilde a(t)e^{-t}$ is increasing near $+\infty$, then $g_1\geq \tilde a(-\log r)r^{-1}$ on $Y_r$ implies that
\begin{equation}
	\label{eq:210820j}
	\mu_1(\{g_1\geq \tilde a(-\log r)r^{-1}\})\geq\mu_1(Y_r)\geq Cr
\end{equation}
holds for any $r>0$ small enough.

As $\tilde a(t)e^{t}$ is strictly increasing near $+\infty$, then there exists $r_0\in(0,1)$ such that
\begin{equation}
	\label{eq:210820k}
	\mu_2(\{x\in(0,r_0]:g_2(x)\geq \tilde a(-\log r)r^{-1}\})=\mu_2(\{0<x\leq Cr\})=C r
\end{equation}
for any $r\in(0,r_0]$.

Using the continuity of $\tilde a(-\log r)r^{-1}$ and $\tilde a(-\log r)r^{-1}$ converges to $+\infty$ ( when $r\rightarrow0+0$), we obtain that
$$\mu_1(\{g_1\geq r^{-1}\})\geq\mu_2(\{x\in(0,r_0]:g_2(x)\geq r^{-1}\})$$
holds for any $r>0$ small enough. Following from Lemma \ref{l:m} and $\tilde a(t)$ is not integrable near $+\infty$, we obtain $\tilde a(-2c_{o,p}^I(\varphi)\varphi+p\log|I|)\exp(-2c_{o,p}^I(\varphi)\varphi+p\log|I|)$ is not integrable near $o$.

Thus, we prove Theorem \ref{thm:main} for the case that $a(t)$ satisfies condition $(2)$.	
\end{proof}

\begin{proof}[Proof of Corollary \ref{c:c}]
	Let $a(t)=\frac{1}{t}\kappa({\log t-\log (2c_o(\varphi)}))$. Note that
$$\int_{N}^{+\infty}a(t)dt=\int_N^{+\infty}\frac{1}{t}\kappa(\log t-\log (2c_o(\varphi)))dt=\int_{\log N}^{+\infty}\kappa(t-\log (2c_{o}(\varphi)))dt=+\infty$$
for $N\gg 1$.
Then the case $p=2$, $I=1$ and $a(t)$ satisfying condition $(1)$ of Theorem \ref{thm:main} implies that Corollary \ref{c:c} holds.
\end{proof}

\vspace{.1in} {\em Acknowledgements}.  The first named author was supported by NSFC-11825101, NSFC-11522101 and NSFC-11431013.

\bibliographystyle{references}
\bibliography{xbib}

\end{document}